\numberwithin{equation}{section}
\theoremstyle{plain}
\newtheorem{theorem}{Theorem}[section]
\newtheorem{lemma}{Lemma}[section]
\newtheorem{corollary}{Corollary}[section]
\newtheorem*{conjecture*}{Conjecture}
\theoremstyle{definition}
\newtheorem{example}{Example}[section]
\def\BC{\mathbb C}
\def\BR{\mathbb R}
\def\bx{\bm{x}}
\def\cA{\mathcal A}
\def\rA{\mathrm A}
\def\rB{\mathrm B}
\def\rC{\mathrm C}
\def\rIm{\mathrm{Im}}
\def\rR{\mathrm R}
\def\rS{\mathrm S}
\def\T{\mathrm T}
\def\rd{\mathrm d}
\def\diag{\mathrm{diag}}
\def\rdiv{\mathrm{div}}
\def\e{\mathrm e}
\def\ri{\mathrm i}
\def\Ga{\Gamma}
\def\De{\Delta}
\def\Om{\Omega}
\def\al{\alpha}
\def\be{\beta}
\def\ga{\gamma}
\def\de{\delta}
\def\ve{\varepsilon}
\def\te{\theta}
\def\ka{\kappa}
\def\la{\lambda}
\def\si{\sigma}
\def\vp{\varphi}
\def\f{\frac}
\def\nb{\nabla}
\def\ov{\overline}
\def\pa{\partial}
\def\wh{\widehat}
\def\un{\underline}
\DeclareMathOperator*{\res}{Res}
\title[Sharp decay estimates for coupled subdiffusion systems]{Sharp decay estimates and numerical analysis for weakly coupled systems of two subdiffusion equations}
\author[Z. Li]{Zhiyuan Li$^1$}
\thanks{$^1$ School of Mathematics and Statistics, Ningbo University, 818 Fenghua Road, Ningbo 315211, China. E-mail: {\tt lizhiyuan@nbu.edu.cn}}
\author[Y. Liu]{Yikan Liu$^2$}
\thanks{$^2$ Corresponding author. Department of Mathematics, Kyoto University, Kitashirakawa-Oiwakecho, Sakyo-ku, Kyoto 606-8502, Japan. E-mail: {\tt liu.yikan.8z@kyoto-u.ac.jp}}
\author[K. Wada]{Kazuma Wada$^3$}
\thanks{$^3$ NS Solutions Corporation, Toranomon Hills Business Tower, 1-17-1 Toranomon, Minato-ku, Tokyo 105-6417, Japan. E-mail: {\tt wada.kazuma.a4a@jp.nssol.nipponsteel.com}}
\keywords{Decay estimate, coupled system, subdiffusion equation, numerical analysis.}
\begin{document}

\begin{abstract}
This paper investigates the initial-boundary value problem for weakly coupled systems of time-fractional subdiffusion equations with spatially and temporally varying coupling coefficients. By combining the energy method with the coercivity of fractional derivatives, we convert the original partial differential equations into a coupled ordinary differential system. Through Laplace transform and maximum principle arguments, we reveal a dichotomy in decay behavior: When the highest fractional order is less than one, solutions exhibit sublinear decay, whereas systems with the highest order equal to one demonstrate a distinct superlinear decay pattern. This phenomenon fundamentally distinguishes coupled systems from single fractional diffusion equations, where such accelerated superlinear decay never occurs. Numerical experiments employing finite difference methods and implicit discretization schemes validate the theoretical findings.
\end{abstract}

\maketitle	

\section{Introduction and the main result}

Let $\al,\be\in\BR_+:=(0,+\infty)$ be constants satisfying $1\ge\al>\be$ and $\Om\subset\BR^d$ ($d=1,2,\dots$) be a bounded domain with a sufficiently smooth boundary $\pa\Om$. In this article, we investigate the following initial-boundary value problem for a coupled subdiffusion system of two equations
\begin{equation}\label{eq-gov}
\begin{cases}
\begin{aligned}
& \pa_t^\al(u-u_0)-\rdiv(\bm A(\bm x)\nb u)+c_{11}(\bm x,t)u+c_{12}(\bm x,t)v=0,\\
& \pa_t^\be(v-v_0)-\rdiv(\bm B(\bm x)\nb v)+c_{21}(\bm x,t)u+c_{22}(\bm x,t)v=0
\end{aligned}
& \mbox{in }\Om\times\BR_+,\\
u=v=0 & \mbox{on }\pa\Om\times\BR_+.
\end{cases}
\end{equation}
Here $\pa_t^\al$ represents the $\al$-th order derivative in time which is defined as the inverse of the $\al$-th order Riemann-Liouville integral operator
\[
J^\al:L^2(0,T)\longrightarrow L^2(0,T),\quad J^\al f(t):=\int_0^t\f{\tau^{\al-1}}{\Ga(\al)}f(t-\tau)\,\rd\tau
\]
for any $T>0$, where $\Ga(\,\cdot\,)$ is the Gamma function. It was revealed in \cite{GL15,KR20} that the domain of $\pa_t^\al$, written as
\[
D(\pa_t^\al)=J^\al(L^2(0,T))=H_\al(0,T),
\]
belongs to some fractional Sobolev space which only allows a pointwise definition for $\al>1/2$. Then the notation $\pa_t^\al(u-u_0)$ means $u(\bm x,\,\cdot\,)-u_0(\bm x)\in H_\al(0,T)$ for a.e.\! $\bm x\in\Om$ and any $T>0$, and the initial condition $u=u_0$ in $\Om\times\{0\}$ only makes usual sense for $\al>1/2$. Especially, for $\al=1$ one can interpret $\pa_t^\al(u-u_0)=\pa_t u$ as the usual partial derivative in time. In the spatial direction of \eqref{eq-gov}, let the principal coefficients $\bm A,\bm B$ in the elliptic parts and the zeroth order coefficients $c_{k\ell}$ ($k,\ell=1,2$) satisfy
\begin{gather}
\bm A,\bm B\in C^1(\ov\Om;\BR_{\mathrm{sym}}^{d\times d}),\quad\bm A(\bm x)\bm\xi\cdot\bm\xi\ge\ka_0|\bm\xi|^2,\ \bm B(\bm x)\bm\xi\cdot\bm\xi\ge\ka_0|\bm\xi|^2,\quad\forall\,\bm x\in\ov\Om,\ \forall\,\bm\xi\in\BR^d,\label{eq-assume1}\\
c_{k\ell}\in L^\infty(\Om\times\BR_+),\quad c_{11}\ge0,\ c_{22}\ge0\quad\mbox{in }\Om\times\BR_+,\label{eq-assume2}
\end{gather}
where $\ka_0>0$ is a constant. Throughout, we abbreviate, for example, $u(t)=u(\bm x,t)$ by considering $u$ as a vector-valued function from $\BR_+$ to some suitable function space in $\Om$.

The physical background of fractional diffusion equations comes from the extension of classical diffusion models. The introduction of fractional derivatives effectively captures anomalous phenomenon such as non-Gaussian profiles and long-term memory effects in porous media and biological tissues, which cannot be well described by traditional integer-order diffusion equations, see e.g., \cite{Adam1992,Hatano1998,Metzler2000} and the references therein.  Fractional diffusion systems employ nonlocal operators and coupling mechanisms to not only describe fundamental diffusion processes such as single equations but also incorporate complex factors, which have significant applications in fields such as chemical reactions, biological mass transfer, and environmental engineering. For example, in the transport of solute through porous media, they can integrate multiple influencing factors into a unified model and, therefore, can describe the dynamic mass exchange process between mobile and immobile zones  \cite{Doerries2022,LiWen2020,Schumer2003,SunNiu2021} and the references therein.

The long-time asymptotic behavior is the most remarkable difference between fractional and non-fractional equations.  This fundamental difference was first quantitatively characterized through single time-fractional equation studies: Sakamoto and Yamamoto \cite{SY11} demonstrated that solutions to time-fractional diffusion equations exhibit fractional polynomial decay, where the memory effect fundamentally alters the exponential decay pattern of classical diffusion. Subsequent extensions to multi-term fractional models in \cite{LLY15} revealed that the lowest order dominates the long-term decay. Remarkably, Luchko et al. \cite{Luchko2014} and Kubica and Ryszewska \cite{Kubica} observed logarithmic-type decay in distributed-order models, which generalize the multi-term time-fractional diffusion equation (featuring a finite sum of fractional derivatives) to a framework involving continuous integration over infinitely many fractional derivatives of distinct orders. Vergara and Zacher \cite{Zacher2015} established the quantitative relation between decay rates and fractional orders in single equations with time-dependent coefficients. Li, Huang, Liu \cite{LHL23} studied the same topic for a weakly coupled subdiffusion systems of $K$ components
\begin{equation}\label{eq-couple}
\pa_t^{\al_k}(u_k-u_0^{(k)})-\rdiv(\bm A_k(\bm x)\nb u_k)+\sum_{\ell=1}^K c_{k\ell}(\bm x)u_\ell=0\quad\mbox{in }\Om\times\BR_+,\ k=1,\dots,K,
\end{equation}
where $1>\al_1\ge\cdots\ge\al_K>0$, $\bm A_k$ satisfy the same assumption as \eqref{eq-assume1} and $c_{k\ell}\in L^\infty(\Om)$ depend only on $\bm x$ ($k,\ell=1,\dots,K$). Under a certain non-positivity assumption on $c_{k\ell}$, \cite[Theorem 2]{LHL23} established the decay estimate (see also Lemma \ref{lem-wp})
\begin{equation}\label{eq-asymp0}
\sum_{k=1}^K\|u_k(t)\|_{H^2(\Om)}\le C\sum_{k=1}^K\|u_0^{(k)}\|_{L^2(\Om)}\,t^{-\al_K},\quad\forall\,t\gg1.
\end{equation}
In other words, the decay of the solution to \eqref{eq-couple} is dominated by the lowest order $\al_K$ of fractional derivatives in time. Moreover, it was asserted that the decay rate $t^{-\al_K}$ is sharp if the initial value $u_0^{(K)}$ of the $K$-th component does not vanish identically in $\Om$. However, the sharp decay estimate for \eqref{eq-couple} in the case of $u_0^{(K)}\equiv0$ in $\Om$ was not studied in the literature, which is a natural problem but still remains open.

Keeping the above background in mind, in this paper we are interested in sharpening the decay estimate \eqref{eq-asymp0} under the aforementioned situation. As the first attempt, we restrict ourselves to the formulation \eqref{eq-gov} of two components. Since $v_0\equiv0$ in $\Om$, the time evolution of $v$ relies completely on the supply from $u$. Therefore, one can imagine that the higher order $\al$ may appear to influence the long-time behavior of the solution. To verify our conjecture, we perform numerical tests under various settings, including both cases of $\al<1$ and $\al=1$. As is reported in Section \ref{sec-test}, we observed a decay rate of $t^{-\al}$ for $\al<1$ as predicted. Surprisingly, we witnessed an unexpected decay pattern for $\al=1$ which seems to be never discovered in the literature. These observations motivate us to establish the following main result of this manuscript.
\begin{theorem}\label{thm-asymp}
Let $v_0\equiv0$ in $\Om,$ $u_0\in L^2(\Om)$ and assumptions {\rm\eqref{eq-assume1}--\eqref{eq-assume2}} be satisfied. Let $C_\Om>0$ be the optimal constant in the Poincar\'e inequality depending only on $\Om,$ i.e.,
\begin{equation}\label{eq-Poincare}
\|\psi\|_{L^2(\Om)}\le C_\Om\|\nb\psi\|_{L^2(\Om)},\quad\forall\,\psi\in H_0^1(\Om),
\end{equation}
and assume that
\begin{equation}\label{eq-assume0}
\f{\ka_0}{C_\Om^2}>\ka_1:=\max\left\{\|c_{12}\|_{L^\infty(\Om\times\BR_+)},\|c_{21}\|_{L^\infty(\Om\times\BR_+)}\right\}.
\end{equation}
Then there exists a constant $C>0$ such that the solution $(u,v)$ to \eqref{eq-gov} satisfies
\begin{equation}\label{eq-asymp1}
\|u(t)\|_{L^2(\Om)}+\|v(t)\|_{L^2(\Om)}\le\begin{cases}
C\|u_0\|_{L^2(\Om)}\,t^{-\al}, & \al<1,\\
C\|u_0\|_{L^2(\Om)}\,t^{-(1+\be)}, & \al=1,
\end{cases}
\quad\forall\,t>1.
\end{equation}
\end{theorem}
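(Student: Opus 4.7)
The plan is to convert the coupled PDE system into a scalar fractional ODE system via energy estimates, dominate the solution by the corresponding equality majorant through a two-component comparison argument, and then extract the sharp decay rates in \eqref{eq-asymp1} through Laplace-transform asymptotics at $s=0$. First I would test the two equations in \eqref{eq-gov} against $u$ and $v$ respectively in $L^2(\Om)$. Invoking the coercivity of the fractional derivative, $\langle\pa_t^\al(w-w_0)(t),w(t)\rangle_{L^2(\Om)}\ge\f12\pa_t^\al(\|w(t)\|_{L^2(\Om)}^2-\|w_0\|_{L^2(\Om)}^2)$ (valid for all $0<\al\le1$, with equality when $\al=1$), together with the ellipticity \eqref{eq-assume1}, the Poincar\'e inequality \eqref{eq-Poincare}, the sign conditions $c_{11},c_{22}\ge 0$, and Cauchy-Schwarz on the cross-coupling terms $c_{12}uv$, $c_{21}uv$, I arrive at a pair of scalar fractional differential inequalities for $f(t):=\|u(t)\|_{L^2(\Om)}$ and $g(t):=\|v(t)\|_{L^2(\Om)}$,
\[
\pa_t^\al(f-f_0)+\la f\le \ka_1 g,\qquad \pa_t^\be g+\la g\le \ka_1 f,\qquad g(0)=0,
\]
where $\la:=\ka_0/C_\Om^2$, $f_0:=\|u_0\|_{L^2(\Om)}$, and assumption \eqref{eq-assume0} translates into $\la>\ka_1$, so the effective damping strictly dominates the coupling.

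Next, let $(\bar f,\bar g)$ solve the corresponding equality system with the same initial data. Using the Volterra integral reformulation together with the non-negativity of the Riemann-Liouville kernels $t^{\al-1}/\Ga(\al)$, $t^{\be-1}/\Ga(\be)$ and the complete monotonicity of the Mittag-Leffler resolvents $E_\al(-\la t^\al)$, $E_\be(-\la t^\be)$, a Picard-iteration monotonicity argument should yield the pointwise comparison $0\le f\le \bar f$ and $0\le g\le \bar g$ on $\BR_+$, where the off-diagonal positivity $\ka_1\ge 0$ is precisely what closes the two-component iteration.

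Taking Laplace transforms of the equality system produces a $2\times 2$ linear algebraic system whose determinant equals $D(s)=(s^\al+\la)(s^\be+\la)-\ka_1^2$, with $D(0)=\la^2-\ka_1^2>0$ by \eqref{eq-assume0}. A direct calculation gives
\[
\wh{\bar f}(s)=\f{s^{\al-1}(s^\be+\la)\,f_0}{D(s)},\qquad \wh{\bar g}(s)=\f{\ka_1\,s^{\al-1}f_0}{D(s)}.
\]
When $\al<1$, the prefactor $s^{\al-1}$ is the dominant non-analytic singularity at $s=0$, and a Karamata Tauberian theorem (or a direct Hankel-contour inversion) converts this into $\bar f(t),\bar g(t)=O(t^{-\al})$. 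When $\al=1$, both transforms become analytic at $s=0$; expanding $D(s)^{-1}$ through the relevant order reveals that the first non-analytic term in each is a non-zero multiple of $s^\be$, with coefficient proportional to $\ka_1^2/(\la^2-\ka_1^2)^2$, whose inverse-Laplace contribution at large $t$ is of order $t^{-(1+\be)}$. Combining these asymptotics with the comparison $f\le \bar f$, $g\le \bar g$ yields \eqref{eq-asymp1}.

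The hardest step is the two-component comparison in the coupled fractional setting, because the classical single-equation fractional maximum principle does not directly apply; one must verify that the Picard iteration on the full $2\times 2$ Volterra system preserves pointwise order, which relies on the simultaneous non-negativity of $\ka_1$, the Riemann-Liouville kernels, and the Mittag-Leffler resolvents, and on the strict dominance $\la>\ka_1$ so that the iteration converges. A secondary delicate point is the quantitative Tauberian step in the critical case $\al=1$: one must certify that the $s^\be$-coefficient in the expansion of $\wh{\bar f}$ and $\wh{\bar g}$ is non-zero (which is where the active coupling $\ka_1>0$ enters essentially) and control the remainder uniformly along a Bromwich contour to obtain the stated rate with an explicit constant---the routine estimates become quantitative here, and this is where the superlinear $t^{-(1+\be)}$ rate, unavailable for a single fractional equation, gets isolated.
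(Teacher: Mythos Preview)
Your overall architecture---energy estimate to scalar fractional ODE inequalities, comparison with the equality majorant via Picard iteration on the Volterra system, then Laplace/Hankel asymptotics of the $2\times2$ resolvent---is exactly the paper's route. The comparison argument and the Laplace computation you sketch match the paper's Lemmas~\ref{lem-max} and~\ref{lem-asymp} almost verbatim, including the determinant $D(s)=(s^\al+\la)(s^\be+\la)-\ka_1^2$ with $D(0)=\la^2-\ka_1^2>0$.

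There is, however, one genuine gap in the first step. You invoke the ``half-squares'' coercivity
\[
(w(t),\pa_t^\al(w(t)-w_0))\ge\tfrac12\,\pa_t^\al\big(\|w(t)\|^2-\|w_0\|^2\big),
\]
and then claim to arrive at \emph{linear} inequalities $\pa_t^\al(f-f_0)+\la f\le\ka_1 g$ for $f=\|u\|$, $g=\|v\|$. For $\al<1$ there is no chain rule relating $\pa_t^\al(f^2)$ to $f\,\pa_t^\al f$, so this passage is unjustified. If instead you keep the squared variables $F=f^2$, $G=g^2$ and use $\sqrt{FG}\le\tfrac12(F+G)$, you do obtain a linear coupled system in $F,G$, but the Laplace analysis then yields $F(t),G(t)=O(t^{-\al})$ (resp.\ $O(t^{-(1+\be)})$), hence only $f,g=O(t^{-\al/2})$ (resp.\ $O(t^{-(1+\be)/2})$)---half the claimed rate. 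The paper avoids this loss by using the sharper norm-level coercivity (its Lemma~\ref{lem-coer}, from Li--Huang--Yamamoto)
\[
(w(t),\pa_t^\al(w(t)-w_0))\ge\|w(t)\|\,\pa_t^\al\big(\|w(t)\|-\|w_0\|\big),
\]
which, after dividing through by $\|w(t)\|\ge0$, gives the linear inequalities for $f,g$ directly. Once you replace your coercivity by this one, the rest of your outline goes through and coincides with the paper's proof.

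A minor remark: you do not need the $s^\be$-coefficient to be nonzero, since the theorem asserts only an upper bound; that coefficient being zero (e.g.\ when $\ka_1=0$) simply means faster decay.
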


The remainder of the paper is organized as follows. In Section \ref{sec-pre}, several necessary results are given, including asymptotic estimates of Mittag-Leffler functions, the coercivity of fractional derivatives, and the residue theory on the complex plane cutting off the negative real axis. In Section \ref{sec-proof}, we establish the maximum principle and the asymptotic behavior of the solution to a coupled fractional ordinary system, from which we further employ the coercivity to give a proof of the main theorem. In Section \ref{sec-scheme}, we propose semi-implicit and fully implicit numerical schemes for fractional diffusion equations, and perform stability analysis to ensure the reliability of numerical simulations. Finally, in Section \ref{sec-test}, we verify the validity of the theoretical results through specific numerical experiments demonstrating the consistency between the numerical results and the theoretical analysis.

\section{Preliminaries}\label{sec-pre}

In the sequel, by $\|\cdot\|$ and $(\,\cdot\,,\,\cdot\,)$ we denote the norm and the inner product of $L^2(\Om)$ respectively. In addition, by $C>0$ we denote generic constants which may change from line to line.

We first recall the frequently used Mittag-Leffler functions $E_{\eta,\mu}(z)$ with two parameters $\eta>0$, $\mu\in\BR$ defined by (e.g.\! Podlubny \cite{PF98})
\[
E_{\eta,\mu}(z)=\sum_{k=0}^\infty\f{z^k}{\Ga(\eta k+\mu)},\quad z\in\BC.
\]
Many properties of the Mittag-Leffler functions play important roles in fractional differential equations, and here we only invoke their estimates and positivity for our use in this article.

\begin{lemma}\label{lem-ML}
\begin{enumerate}
\item Let $0<\eta<2$ and $\mu\in\BR$. Then there exists a constant $C>0$ depending only on $\eta,\mu$ such that
$$
|E_{\eta,\mu}(-z)|\le\f C{1+z},\quad\forall\,z\ge0.
$$
\item If $0<\eta\le1,$ then
\[
0<E_{\eta,1}(-z)\le1,\quad0<E_{\eta,\eta}(-z)\le1,\quad\forall\,z\ge0.
\]
\end{enumerate}
\end{lemma}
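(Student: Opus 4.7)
The plan is to follow the classical treatment of Mittag-Leffler functions, treating the two assertions separately. Part (i) splits naturally into a short-argument regime, handled by the defining power series, and a long-argument regime, handled via a Hankel-type integral representation. Part (ii) will follow immediately from the complete monotonicity of $z\mapsto E_{\eta,1}(-z)$ and $z\mapsto E_{\eta,\eta}(-z)$ on $[0,+\infty)$ combined with their values at the origin.

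For part (i), since $z\mapsto E_{\eta,\mu}(-z)$ is entire, hence continuous, it is bounded on the compact interval $[0,1]$ by some constant $M$, which already yields $|E_{\eta,\mu}(-z)|\le M\le 2M/(1+z)$ on that interval. For $z>1$, I would invoke the Hankel contour representation
$$
E_{\eta,\mu}(w)=\f1{2\pi\ri}\int_{\mathrm{Ha}}\f{\e^\zeta\,\zeta^{\eta-\mu}}{\zeta^\eta-w}\,\rd\zeta,
$$
valid for $0<\eta<2$, where $\mathrm{Ha}$ is a Hankel path wrapping the negative real axis, and deform the contour so as to extract the one-term asymptotic expansion
$$
E_{\eta,\mu}(-z)=\f1{z\,\Ga(\mu-\eta)}+O(z^{-2})\quad\mbox{as }z\to+\infty.
$$
This yields $|E_{\eta,\mu}(-z)|\le C/z\le 2C/(1+z)$ for $z>1$, and combining the two ranges concludes (i).

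For part (ii), the restriction $0<\eta\le1$ enters through the classical fact, due to Pollard for $E_{\eta,1}$ and provable by a parallel Bernstein/Laplace-transform argument for $E_{\eta,\eta}$ using $\mathcal{L}[t^{\eta-1}E_{\eta,\eta}(-t^\eta)](s)=1/(s^\eta+1)$, that both functions are completely monotone on $[0,+\infty)$. Complete monotonicity delivers strict positivity on the whole half-line, while monotone decrease reduces the upper bound to the evaluation at $z=0$: one has $E_{\eta,1}(0)=1/\Ga(1)=1$ and $E_{\eta,\eta}(0)=1/\Ga(\eta)\le1$, where the last inequality follows from $\Ga$ being decreasing on $(0,1]$ with $\Ga(1)=1$.

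The principal technical hurdle is the contour-deformation step in part (i): one must verify uniform-in-$z$ decay of the integrand on the circular arcs of the deformed Hankel contour, and this is precisely where the condition $0<\eta<2$ becomes essential, since beyond this range the branch structure of $\zeta^\eta$ together with the pole at $\zeta^\eta=-z$ cease to be separable by a standard deformation. Once this analytic step is justified, the remainder of the argument in both parts reduces to routine bookkeeping.
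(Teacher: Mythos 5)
Your proposal is correct. Note, however, that the paper does not prove Lemma \ref{lem-ML} at all: it is stated as a known fact and the reader is referred to Podlubny \cite{PF98}. What you have written is essentially the standard argument found there — part (i) is Podlubny's Theorem 1.6, obtained exactly as you describe by combining boundedness on a compact interval with the one-term asymptotic expansion $E_{\eta,\mu}(-z)=\f{z^{-1}}{\Ga(\mu-\eta)}+O(z^{-2})$ extracted from the Hankel representation (the same expansion the paper itself quotes as \eqref{eq-asymp-ML} in the appendix), and part (ii) follows from Pollard-type complete monotonicity together with $E_{\eta,1}(0)=1$ and $E_{\eta,\eta}(0)=1/\Ga(\eta)\le1$ for $\eta\in(0,1]$. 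Two small points of care. First, in (i), when $\mu-\eta$ is a non-positive integer the leading coefficient $1/\Ga(\mu-\eta)$ vanishes, but the conclusion $|E_{\eta,\mu}(-z)|\le C/z$ for $z>1$ survives (one simply gets a better rate), so no case distinction is needed. Second, in (ii), your sketch for $E_{\eta,\eta}$ is slightly imprecise: the identity $\mathcal L[t^{\eta-1}E_{\eta,\eta}(-t^\eta)](s)=1/(s^\eta+1)$ yields complete monotonicity of $t\mapsto t^{\eta-1}E_{\eta,\eta}(-t^\eta)$, not directly of $z\mapsto E_{\eta,\eta}(-z)$; the latter is nevertheless a classical fact (Schneider's criterion: $E_{\al,\mu}(-z)$ is completely monotone iff $0<\al\le1$ and $\mu\ge\al$, which applies here with $\mu=\al=\eta$), so the step is citable rather than a gap. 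With that reference supplied, your argument is complete and matches the source the paper relies on.
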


Further details concerning the above lemma can be found in Podlubny \cite{PF98}.

Next, let us turn to the coercivity of the fractional derivative $\pa_t^\al$. For the usual first order partial derivative $\pa_t$, it is readily seen for a smooth function $w$ defined in $\Om\times\BR_+$ that
\[
(w(t),\pa_t w(t))=\f12\f\rd{\rd t}\left(\|w(t)\|^2\right)=\|w(t)\|\,\pa_t\|w(t)\|.
\]
The following lemma generalizes the above fact to $\pa_t^\al$ with a slight modification.

\begin{lemma}\label{lem-coer}
For any $T>0,$ let $w\in L^2(0,T;L^2(\Om))$ satisfy $w-w_0\in H_\al(0,T;L^2(\Om))$ with $w_0\in L^2(\Om)$. Then there holds
\begin{equation}\label{esti-coer}
(w(t),\pa_t^\al(w(t)-w_0))\ge\|w(t)\|\,\pa_t^\al(\|w(t)\|-\|w_0\|)
\end{equation}
for a.e.\! $t\in(0,T)$.
\end{lemma}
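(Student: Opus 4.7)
The plan is to split according to whether $\al=1$ or $\al\in(0,1)$. In the case $\al=1$, \eqref{esti-coer} reduces to the classical identity $(w(t),\pa_t w(t))=\f12\pa_t\|w(t)\|^2=\|w(t)\|\,\pa_t\|w(t)\|$, so equality holds trivially. All the content therefore lies in the fractional range $0<\al<1$.

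For $0<\al<1$, I would invoke the Marchaud (nonlocal pointwise) representation of $\pa_t^\al$: for sufficiently regular $f$,
\[
\pa_t^\al(f(t)-f(0))=\f1{\Ga(1-\al)}\left[\f{f(t)-f(0)}{t^\al}+\al\int_0^t\f{f(t)-f(\tau)}{(t-\tau)^{\al+1}}\,\rd\tau\right].
\]
Applying this (as a Bochner integral in $L^2(\Om)$) to $f=w$, pairing with $w(t)$, and independently applying the scalar version to $\phi(t):=\|w(t)\|$, the difference between the left- and right-hand sides of \eqref{esti-coer} reduces to
\[
\f1{\Ga(1-\al)}\left[\f{\|w(t)\|\|w_0\|-(w(t),w_0)}{t^\al}+\al\int_0^t\f{\|w(t)\|\|w(\tau)\|-(w(t),w(\tau))}{(t-\tau)^{\al+1}}\,\rd\tau\right].
\]
Each numerator is non-negative by the Cauchy--Schwarz inequality and each denominator is positive, so the whole expression is non-negative and \eqref{esti-coer} follows.

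The technical hurdle is the justification of this Marchaud representation under only the hypothesis $w-w_0\in H_\al(0,T;L^2(\Om))$, since that space admits no pointwise evaluation in general (particularly for $\al\le1/2$). To address this I would proceed by density: writing $w=w_0+J^\al f$ with $f\in L^2(0,T;L^2(\Om))$, approximate $f$ in $L^2$ by a sequence $f_n\in C_c^\infty((0,T);L^2(\Om))$; the corresponding $w_n:=w_0+J^\al f_n$ is smooth on $(0,T)$ so that the Marchaud identity and hence \eqref{esti-coer} hold for $w_n$ pointwise. Passing to the limit is the main obstacle I anticipate: continuity of the inner product and of the norm, together with $L^2$-boundedness of $J^\al$, should give convergence of each of the four scalar quantities involved to its $w$-counterpart, after which extraction of a subsequence yields \eqref{esti-coer} for a.e.\! $t\in(0,T)$.
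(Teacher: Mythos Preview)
Your strategy coincides with the paper's: dispose of $\al=1$ trivially, prove \eqref{esti-coer} first for smooth $w$, then extend by density in $H_\al$. The only substantive difference is that the paper obtains the smooth case by citing \cite[Lemma~4.1]{LHY21}, whereas you supply a self-contained argument via the Marchaud representation and Cauchy--Schwarz---which is precisely what that cited lemma proves, so you are unpacking the reference rather than taking a new route. Your density step (approximate $f=\pa_t^\al(w-w_0)$ in $L^2$ by $C_c^\infty$ functions and set $w_n=w_0+J^\al f_n$) is equivalent to the paper's (approximate $w-w_0$ in $H_\al$ by ${}_0C^1$ functions, invoking \cite{KR20}), since $H_\al=J^\al(L^2)$.

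Both proofs are equally terse on the limit passage, which you rightly flag as the real obstacle. The one point your sketch does not cover is the convergence of $\pa_t^\al(\|w_n\|-\|w_0\|)$: this needs $\|w_n\|\to\|w\|$ in $H_\al(0,T)$, and ``$L^2$-boundedness of $J^\al$'' alone does not deliver that. What does is the fact that $u\mapsto\|u\|$ is $1$-Lipschitz from $L^2(\Om)$ to $\BR$, so $|\,\|w_n(t)\|-\|w_n(s)\|\,|\le\|w_n(t)-w_n(s)\|$, which controls the Gagliardo seminorm and lets one transfer $H_\al$-convergence of $w_n-w_0$ to $H_\al$-convergence of $\|w_n\|-\|w_0\|$; the paper's ``simply passing $m\to\infty$'' hides exactly the same step.
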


\begin{proof}
The case of $\al=1$ is trivial and we only deal with that of $\al<1$. 

The above inequality can be proved by a density argument. To this end, we first assume $w\in C^1([0,T];L^2(\Om))$. Then it follows from Li, Huang and Yamamoto \cite[Lemma 4.1]{LHY21} that
\begin{equation}\label{esti-coercivity}
(w(t),\rd_t^\al w(t))\ge\|w(t)\|\,\rd_t^\al\|w(t)\|,\quad0<t<T,
\end{equation}
where $\rd_t^\al$ stands for the $\al$-th order Caputo derivative defined by
\begin{equation}\label{eq-Caputo}
\rd_t^\al f(t):=\int_0^t\f{\tau^{-\al}}{\Ga(1-\al)}f'(t-\tau)\,\rd\tau,\quad f\in C^1[0,T].
\end{equation}
Since it is known that $\rd_t^\al$ and $\pa_t^\al$ satisfy
\[
\rd_t^\al f(t)=\pa_t^\al(f(t)-f(0)),\quad f\in C^1[0,T],
\]
one can rewrite \eqref{esti-coercivity} in terms of $\pa_t^\al$, so that \eqref{esti-coer} holds for any $t\in(0,T)$ and $w\in C^1([0,T];L^2(\Om))$ or equivalently $w-w_0\in{}_0C^1([0,T];L^2(\Om))$, where $_0C^1[0,T]:=\{f\in C^1[0,T]\mid f(0)=0\}$.

Next, we treat the case of $w-w_0\in H_\al(0,T;L^2(\Om))$. According to \cite[Lemma 2.2]{KR20},  the space $_0C^1[0,T]$ is dense in $H_\al(0,T)$. Therefore, we can choose a sequence $\{w_m\}_{m=1}^\infty\subset C^1([0,T];L^2(\Om))$ such that
\[
w_m-w_0\in{}_0C^1([0,T];L^2(\Om)),\quad w_m-w_0\longrightarrow w-w_0\mbox{ in }H_\al(0,T;L^2(\Om))\mbox{ as }m\to\infty.
\]
Therefore, we have
\[
\pa_t^\al(w_m-w_0)\longrightarrow\pa_t^\al(w-w_0)\quad\mbox{in }L^2(0,T;L^2(\Om))\mbox{ as }m\to\infty
\]
and for each $w_n$, there holds
\[
(w_m(t),\pa_t^\al(w_m(t)-w_0))\ge\|w_m(t)\|\,\pa_t^\al(\|w_m(t)\|-\|w_0\|),\quad0<t<T.
\]
Then the coercivity \eqref{esti-coer} can be derived by simply passing $m\to\infty$ in the above inequality.
\end{proof}

The proof of our main result relies heavily on the application of the residue theorem, and we first recall the following classical result regarding holomorphic functions on the complex plane.

\begin{lemma}\label{lem-residue0}
Let $D$ be an open domain in the complex plane $\BC$. Then there holds
$$
\oint_\ga g(z)\,\rd z=2\pi\,\ri\sum_{k=1}^N\res_{z=z_k}g(z),
$$
where $f$ is a holomorphic function defined in $D\setminus\{z_1,\dots,z_N\}$ and $\ga$ is a rectifiable and closed Jordan curve in $D$ enclosing but not passing through $z_1,\dots,z_N$.
\end{lemma}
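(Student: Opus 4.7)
The plan is to reduce the residue theorem to Cauchy's integral theorem for holomorphic functions together with a direct Laurent-series computation on small circles around each singularity, as in the standard complex analysis treatment.

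First, I would isolate the singularities. By the Jordan curve theorem, $\ga$ bounds an interior region; by assumption, each $z_k$ lies in this interior and $\ga$ avoids them. I would choose radii $r_k>0$ small enough that the closed disks $\ov{B(z_k,r_k)}$ are pairwise disjoint, contained strictly inside $\ga$, and disjoint from $\ga$. Let $C_k:=\pa B(z_k,r_k)$ be oriented counterclockwise.

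Next, I would apply Cauchy's integral theorem on the multiply connected region $G$ obtained by removing the small open disks from the interior of $\ga$. Since $g$ is holomorphic on $G$ (the excisions remove all singularities), its integral over the oriented boundary vanishes. Reversing orientation on the inner boundaries then yields
\[
\oint_\ga g(z)\,\rd z=\sum_{k=1}^N\oint_{C_k}g(z)\,\rd z.
\]
For each $k$, since $g$ is holomorphic in a punctured neighborhood of $z_k$, it admits a Laurent expansion
\[
g(z)=\sum_{n=-\infty}^\infty a_n^{(k)}(z-z_k)^n
\]
converging uniformly on $C_k$ for $r_k$ sufficiently small. Integrating termwise and using the elementary identity
\[
\oint_{C_k}(z-z_k)^n\,\rd z=\begin{cases}2\pi\,\ri, & n=-1,\\ 0, & n\neq-1,\end{cases}
\]
I obtain $\oint_{C_k}g(z)\,\rd z=2\pi\,\ri\,a_{-1}^{(k)}=2\pi\,\ri\res_{z=z_k}g(z)$, and summing over $k$ gives the desired formula.

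The only genuine obstacle is the deformation step when $\ga$ is merely rectifiable rather than piecewise smooth. In that generality one should either invoke a version of Cauchy's theorem formulated for rectifiable Jordan curves (via the homology or homotopy formulation of Cauchy's theorem, which remains valid on $D\setminus\{z_1,\dots,z_N\}$), or approximate $\ga$ from inside by polygonal cycles homologous to $\ga$ in $D\setminus\{z_1,\dots,z_N\}$ and pass to the limit using the rectifiability of $\ga$ to control the arc-length integrals. Once this foundational step is granted, the remainder of the proof is a routine Laurent-series computation.
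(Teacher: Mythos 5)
Your proof is correct: it is the standard textbook argument for the residue theorem (excising small disjoint disks around the singularities, applying Cauchy's theorem on the resulting multiply connected region, and evaluating each small-circle integral termwise from the Laurent expansion), and you rightly flag the only delicate point, namely justifying Cauchy's theorem for a merely rectifiable Jordan curve via the homology formulation or polygonal approximation. The paper itself offers no proof of this lemma --- it is stated as a recalled classical fact --- so there is nothing to compare against; your argument simply supplies the standard proof of the cited result.
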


Now for a multi-valued meromorphic function $g$ satisfying $|g(z)|\le C|z|^{-p}$ with some constant $p>0$, suppose that all poles $\{z_1,\dots,z_N\}$ of $g(z)$ are distributed in the left half complex plane excluding the real axis. On the basis of Lemma \ref{lem-residue0}, we establish a more general residue result which is suitable also for the function $g(z)$ on the complex plane cutting off the negative axis.

\begin{lemma}\label{lem-residue1}
Let $g$ be a meromorphic function on the complex plane cutting off the negative axis. Assume that there exists a constant $p>0$ such that
\begin{equation}\label{eq-decay-p}
|g(z)|\le C|z|^{-p},\quad z\in\BC
\end{equation}
and all poles $z_1,\dots,z_N$ of $g(z)$ have negative real parts and non-vanishing imaginary parts. Then there holds
$$
\int_{s_0-\ri\,\infty}^{s_0+\ri\,\infty}g(z)\,\rd z =\int_\ga g(z)\,\rd z+2\pi\,\ri\sum_{k=1}^N\res_{z=z_k}g(z),
$$
where $s_0>0$ is a constant and the integral path $\ga$ is defined as
\[
\ga:=\begin{cases}
r\,\e^{\ri\,\pi}, & r>0,\\
r\,\e^{-\ri\,\pi}, & r>0.
\end{cases}
\]
\end{lemma}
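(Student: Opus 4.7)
The plan is to apply the ordinary residue theorem (Lemma \ref{lem-residue0}) on a keyhole contour wrapping around the branch cut along the negative real axis. For $R\gg 1$ and $0<\ve\ll 1,$ I would let $\Ga_{R,\ve}$ denote the closed contour oriented counterclockwise and consisting of: the vertical segment $V_R$ from $s_0-\ri R$ to $s_0+\ri R$; an upper arc $A_R^+$ of radius of order $R$ from $s_0+\ri R$ to $-R+\ri 0^+$; the upper edge of the cut $\{r\,\e^{\ri\pi}:\ve\le r\le R\}$ traversed toward the origin; a small circle $C_\ve$ of radius $\ve$ around the origin traversed clockwise; the lower edge of the cut $\{r\,\e^{-\ri\pi}:\ve\le r\le R\}$ traversed away from the origin; and a lower arc $A_R^-$ from $-R-\ri 0^+$ back to $s_0-\ri R$. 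For $R$ large and $\ve$ small enough, all the poles $z_1,\dots,z_N$ lie strictly inside $\Ga_{R,\ve},$ so Lemma \ref{lem-residue0} gives
\[
\oint_{\Ga_{R,\ve}}g(z)\,\rd z=2\pi\,\ri\sum_{k=1}^N\res_{z=z_k}g(z).
\]

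The next step is to pass to the limits $R\to\infty$ and $\ve\to 0$ in each piece. The decay bound \eqref{eq-decay-p} controls $A_R^\pm$ by a constant multiple of $R^{1-p}$ (arc length $O(R)$ times $|g|\le CR^{-p}$) and $C_\ve$ by $\ve^{1-p},$ which both tend to zero once $p$ is taken large enough; in the later applications of the lemma, additional exponential decay makes this step routine via a Jordan-type estimate. The vertical segment $V_R$ then tends to $\int_{s_0-\ri\infty}^{s_0+\ri\infty}g(z)\,\rd z.$ Parametrizing the two edges of the cut by $z=r\,\e^{\pm\ri\pi}$ produces in the limit
\[
\int_\infty^0 g(r\,\e^{\ri\pi})\,\e^{\ri\pi}\,\rd r+\int_0^\infty g(r\,\e^{-\ri\pi})\,\e^{-\ri\pi}\,\rd r,
\]
which differs from $\int_\ga g(z)\,\rd z$ only by a sign coming from the counterclockwise orientation of $\Ga_{R,\ve}$ against the natural orientation of $\ga$. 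Rearranging then yields the stated identity.

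I expect the main obstacle to lie in the orientation bookkeeping on the keyhole contour---matching the direction in which the two edges of the cut are traversed with the author's convention for $\ga$ so that the sign of $\int_\ga g$ comes out as written, and keeping track of how the clockwise small circle contribution disappears in the limit. A secondary concern is that the bare hypothesis $|g(z)|\le C|z|^{-p}$ with $p>0$ only kills the arc and small-circle contributions when $p>1;$ in practice the lemma will be applied to multi-valued meromorphic functions whose extra structure (or an implicit requirement that $p$ be sufficiently large) guarantees the required integrability.
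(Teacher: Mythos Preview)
Your approach is essentially the paper's: both deform the Bromwich line onto a keyhole contour wrapping the negative real axis and invoke the classical residue theorem (Lemma~\ref{lem-residue0}). The only cosmetic difference is that the paper uses a rectangular outer boundary together with rays at an intermediate opening angle $\te_0$, and then takes three successive limits $R\to\infty$, $\te_0\to\pi$, $\ve\to0$; you go directly to the cut edges with circular outer arcs and take only $R\to\infty$, $\ve\to0$. Your version is the standard textbook keyhole, and the paper's extra $\te_0$ step buys nothing essential.

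One slip worth flagging: you write that the small-circle contribution is $O(\ve^{1-p})$ and that this ``tends to zero once $p$ is taken large enough,'' but $\ve^{1-p}\to0$ as $\ve\to0$ requires $p<1$, not $p$ large---the conditions needed near $0$ and near $\infty$ pull in opposite directions. You are right to be suspicious of the bare hypothesis \eqref{eq-decay-p}; the paper's own proof is equally informal on exactly this point and simply asserts the limits. In the actual application (Lemma~\ref{lem-asymp}) the explicit Laplace transforms behave like $s^{\al-1}$ near the origin (hence integrable) and carry the factor $\e^{st}$ giving exponential decay on the arcs in the left half-plane, so both limits are harmless there---as you anticipated in your closing remarks.
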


\begin{proof}
Let us take an integral path $\ga_{R,\te_0,\ve}$ as the curve illustrated in Figure \ref{fig-curve}.
\begin{figure}[htbp]\centering
\begin{tikzpicture}
\draw[line width=1pt,->] (-5,0) --(5,0) node[right]{Re};
\draw[line width=1pt,->] (0,-4) --(0,4) node[left]{Im};
\draw[blue,line width=1pt] (-0.7,-0.7) arc (-135:135:1);
\draw[line width=1pt] (-0.7,-0.7)-- (-3,-3);
\draw[line width=1pt] (-0.7,0.7)-- (-3,3);
\draw[line width=1pt] (-3,3)-- (3,3);
\draw[line width=1pt] (-3,-3)-- (3,-3);
\draw[line width=1pt,->] (3,-3)-- (3,2);
\draw[line width=1pt] (3,2)-- (3,3);
\draw[line width=1pt,->] (4,2) --(5,2) node[above] {$\ga_{R,\te_0,\ve}$};
\fill[blue](0,-3)circle(2pt)node[below]{$-\ri\,R$\qquad\quad\quad};
\fill[blue](3,0)circle(2pt)node[below]{\qquad$s_0$};
\fill[blue](0,3)circle(2pt)node[below]{$\ri\,R$\qquad\quad};
\draw[gray,line width=4pt,opacity=0.5] (-5,0) --(0,0);
\end{tikzpicture}
\caption{The choice of an integral path $\ga_{R,\te_0,\ve}$.}\label{fig-curve}
\end{figure}
Here $\ga_{R,\te_0,\ve}$ is required not to pass through $z_1,\dots,z_N$, and $R>0$ is taken sufficiently large. Then Lemma \ref{lem-residue0} indicates
$$
\oint_\ga g(z)\,\rd z=2\pi\,\ri\sum_{k=1}^N\res_{z=z_k}g(z).
$$
In addition, applying the assumption \eqref{eq-decay-p} for $|z|>1$, we pass $R\to+\infty$ to derive
$$
\int_{\Ga_{\ve,\te_0}}g(z)\,\rd z+\int_{s_0-\ri\,\infty}^{s_0+\ri\,\infty}g(z)\,\rd z=2\pi\,\ri\sum_{k=1}^N\res_{z=z_k}g(z),
$$
where
\[
\Ga_{\ve,\te_0}:=\begin{cases}
r\,\e^{\ri\,\te_0}, & r>\ve,\\
\ve\,\e^{\ri\,\te}, & -\te_0<\te<\te_0,\\
r\,\e^{-\ri\,\te_0}, & r>\ve.
\end{cases}
\]
At the same time, passing $\te\to\pi$ yields
$$
\f1{2\pi\,\ri}\int_{s_0-\ri\,\infty}^{s_0+\ri\,\infty}g(z)\,\rd z=\sum_{k=1}^N\res_{z=z_k}g(z)+\f1{2\pi\,\ri}\int_{\ga_\ve}g(z)\,\rd z,   
$$
where 
\[
\Ga_\ve:=\begin{cases}
r\,\e^{\ri\,\pi}, & r>\ve,\\
\ve\,\e^{\ri\,\te}, & -\pi<\te<\pi,\\
r\,\e^{-\ri\,\pi}, & r>\ve.
\end{cases}
\]
Finally, passing $\ve\to0$, it arrives at the desired residue theorem on the complex plane cutting off the negative axis, which finishes the proof of Lemma \ref{lem-residue1}.
\end{proof}

Finally, concerning the original problem \eqref{eq-gov}, we revisit the results on the well-posedness and the long-time asymptotic behavior obtained in \cite[Theorems 1--2]{LHL23} and collect necessary parts of them in the following lemma.

\begin{lemma}\label{lem-wp}
Let $1>\al>\be>0,$ $u_0,v_0\in L^2(\Om)$ and assumptions {\rm\eqref{eq-assume1}--\eqref{eq-assume2}} be satisfied.
\begin{enumerate}
\item For any $T>0,$ there exists a unique solution $(u,v)\in(L^2(0,T;H_0^1(\Om)))^2$ such that
\[
\lim_{t\to0}\|u(t)-u_0\|=\lim_{t\to0}\|v(t)-v_0\|=0.
\]
Moreover, there exists a constant $C>0$ depending only on $\Om,\al,\be,\bm A,\bm B,c_{k\ell}$ $(k,\ell=1,2)$ such that
\[
\|u(t)\|_{H^1(\Om)}+\|v(t)\|_{H^1(\Om)}\le C(\|u_0\|+\|v_0\|)\exp(C t)\,t^{-\al/2},\quad\mbox{a.e.\! }t\in(0,T).
\]
\item In addition, assume that $\bm C=(c_{k\ell})_{1\le k,\ell\le2}\in(L^\infty(\Om))^{2\times2}$ is a $t$-independent negative semi-definite matrix-valued function in $\Om$. Then for any fixed $t_0>0,$ there exists a constant $C>0$ depending on $t_0,\Om,\al,\be,\bm A,\bm B,\bm C$ such that
\[
\|u(t)\|_{H^2(\Om)}+\|v(t)\|_{H^2(\Om)}\le C(\|u_0\|+\|v_0\|)\,t^{-\be},\quad\forall\,t\ge t_0.
\]
Moreover, the decay rate $t^{-\be}$ is sharp provided that $v_0\not\equiv0$ in $\Om$.
\end{enumerate}
\end{lemma}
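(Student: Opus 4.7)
The plan is to follow the strategy outlined in the abstract: first reduce the coupled PDE system to a coupled scalar fractional ODE inequality system for the $L^2$-norms, then compare with an equality system whose Laplace transform can be computed explicitly, and finally extract the asymptotics via the residue theorem of Lemma \ref{lem-residue1}. Set $U(t):=\|u(t)\|$, $V(t):=\|v(t)\|$, $U_0:=\|u_0\|$, and $\la:=\ka_0/C_\Om^2$, so that (\ref{eq-assume0}) reads $\la>\ka_1$. Testing the first equation of \eqref{eq-gov} by $u$ and integrating, we use Lemma \ref{lem-coer} on the time-derivative term, \eqref{eq-assume1} together with \eqref{eq-Poincare} on the elliptic term, the sign condition $c_{11}\ge0$, and Cauchy--Schwarz with the $L^\infty$-bound $\ka_1$ on the off-diagonal coupling to obtain, after dividing by $U(t)$, the inequality $\pa_t^\al(U-U_0)+\la U\le\ka_1 V$. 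Repeating the same procedure for the $v$-equation (using $v_0\equiv0$) yields $\pa_t^\be V+\la V\le\ka_1 U$, starting from $V(0)=0$.

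Next, introduce the comparison system obtained by replacing the two inequalities by equalities with data $\tilde U(0)=U_0$, $\tilde V(0)=0$. Since all off-diagonal couplings carry the positive coefficient $\ka_1$ and the diagonal damping $\la$ strictly dominates $\ka_1$, a vectorial maximum-principle argument for the coupled fractional ODE system (built from the scalar fractional comparison principle applied iteratively, or directly from the non-negativity of the relevant Mittag-Leffler kernels in Lemma \ref{lem-ML}(ii)) gives $0\le U\le\tilde U$ and $0\le V\le\tilde V$. Laplace-transforming the equality system and solving the resulting $2\times 2$ algebraic system produces
\[
\wh{\tilde U}(s)=\f{s^{\al-1}(s^\be+\la)\,U_0}{(s^\al+\la)(s^\be+\la)-\ka_1^2},\qquad\wh{\tilde V}(s)=\f{\ka_1\,s^{\al-1}\,U_0}{(s^\al+\la)(s^\be+\la)-\ka_1^2}.
\]
The assumption $\la>\ka_1$ ensures that all zeros of the denominator off the branch cut lie in the open left half-plane (the denominator evaluated at $s=0$ equals $\la^2-\ka_1^2>0$, preventing accumulation near the origin), and moreover have nonvanishing imaginary parts, so Lemma \ref{lem-residue1} applies with $g(s)=\e^{st}\wh{\tilde U}(s)$ and the same for $\wh{\tilde V}$. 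The residue contributions decay exponentially in $t$, so the polynomial decay is governed entirely by the Hankel integral along the two sides of the negative real axis.

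On this contour, write $s=r\,\e^{\pm\ri\,\pi}$ and compute the jump across the cut. For $\al<1$, the factor $s^{\al-1}$ already produces a jump of order $r^{\al-1}\sin(\pi\al)$; substituting $\int_0^\infty r^{\al-1}\e^{-rt}\,\rd r=\Ga(\al)\,t^{-\al}$ gives the rate $t^{-\al}$ for both $\tilde U(t)$ and $\tilde V(t)$, matching the first line of \eqref{eq-asymp1}. For $\al=1$, the prefactor $s^{\al-1}\equiv1$ is single-valued and $(s+\la)$ has no jump, so the leading discontinuity must be extracted from the $s^\be$ occurrences inside the denominator and the numerator; a careful Taylor expansion near $r=0$ shows that the constants and the $r^0$ contributions cancel and that the leading jump is of order $r^\be\sin(\pi\be)$, so that $\int_0^\infty r^\be\e^{-rt}\,\rd r=\Ga(1+\be)\,t^{-(1+\be)}$ yields the accelerated rate in the second line of \eqref{eq-asymp1}.

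The main obstacle is precisely this Hankel-jump analysis in the borderline case $\al=1$: one must show that the leading $r^0$ term in the jump cancels exactly so that the next order $r^\be$ emerges as the true leading decay, and simultaneously produce a uniform remainder estimate valid for all $t>1$ rather than merely $t\to\infty$. This requires exploiting the algebraic identity satisfied by the numerator $s^\be+\la$ relative to the structure of the denominator, together with the fact that the ``extra'' Mittag-Leffler factor arising from the integer-order operator $\pa_t$ decays exponentially and is thus absorbed into the residue sum. Once the leading Hankel asymptotics are identified and the remainders are shown to be of strictly lower order than $t^{-(1+\be)}$ uniformly in $t>1$, combining with the comparison $U\le\tilde U$, $V\le\tilde V$ and the triangle inequality gives \eqref{eq-asymp1}.
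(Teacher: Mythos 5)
Your proposal does not prove the statement in question. What you have written is, almost verbatim, the strategy the paper uses in Section \ref{sec-proof} to prove Theorem \ref{thm-asymp} (energy method plus Lemma \ref{lem-coer} to reduce to the fractional ODE system \eqref{eq-gov-2}, comparison via the maximum principle of Lemma \ref{lem-max}, Laplace transform and the cut-plane residue theorem of Lemma \ref{lem-residue1}). But the statement you were asked to prove is Lemma \ref{lem-wp}, which the paper does not prove at all: it is quoted from \cite[Theorems 1--2]{LHL23} as a known result. More importantly, your argument establishes different conclusions under different hypotheses, so it cannot be repaired into a proof of Lemma \ref{lem-wp} without substantial new work.

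Concretely: (1) You never address part (i) -- the existence and uniqueness of the solution in $(L^2(0,T;H_0^1(\Om)))^2$, the attainment of the initial data, and the short-time $H^1$ bound $C(\|u_0\|+\|v_0\|)\exp(Ct)\,t^{-\al/2}$. An $L^2$-energy inequality for $U(t)=\|u(t)\|$ presupposes that the solution already exists and says nothing about uniqueness or the $H^1$ norm; the standard route here is an iterative construction of the mild solution via the eigenfunction expansion of the elliptic operators and Mittag-Leffler estimates, which is absent from your proposal. (2) For part (ii), you silently assume $v_0\equiv0$ ("using $v_0\equiv0$") and invoke \eqref{eq-assume0}, but Lemma \ref{lem-wp}(ii) assumes general $v_0\in L^2(\Om)$ and a \emph{negative semi-definiteness} condition on the $t$-independent matrix $\bm C$, not \eqref{eq-assume0}. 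Accordingly you derive the rate $t^{-\al}$ (or $t^{-(1+\be)}$ when $\al=1$, a case excluded by the lemma's hypothesis $1>\al$), whereas the lemma claims the slower rate $t^{-\be}$ driven by the lower order; with $v_0\not\equiv0$ your own Laplace-transform formula for $\wh{\tilde V}$ would acquire an extra term $s^{\be-1}\|v_0\|$ in the numerator, and the resulting Hankel integral produces $t^{-\be}$, not $t^{-\al}$. (3) The lemma's estimate is in the $H^2(\Om)$ norm for $t\ge t_0$, which requires smoothing estimates for the solution operator; the scalar energy reduction only ever controls $\|u(t)\|_{L^2(\Om)}$. (4) The sharpness assertion for $t^{-\be}$ when $v_0\not\equiv0$ is a \emph{lower} bound on the decay, which no comparison-from-above argument can deliver; one needs, e.g., the leading coefficient of the asymptotic expansion of the Mittag-Leffler representation to be shown nonzero.
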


In \cite{LHL23}, all fractional orders of time derivatives were assumed to be strictly smaller than $1$. In this sense, Lemma \ref{lem-wp} fails to cover the special case of $\al=1$ in \eqref{eq-gov}. However, the argument of an iterative construction of the mild solution definitely works for $\al=1$ and thus the well-posedness part in Lemma \ref{lem-wp}(i) still holds.

\section{Proof of the main result}\label{sec-proof}

This section is devoted to the proof of Theorem \ref{thm-asymp}. The main strategy turns out to be a reduction of the original problem \eqref{eq-gov} to a corresponding initial value problem for a coupled system of fractional ordinary differential equations. There are two key ingredients for treating the latter, namely, the maximum principle and the decay estimates of fractional ordinary systems.

\subsection{Maximum principle of fractional ordinary systems}

Let us consider an initial value problem for a coupled fractional ordinary differential system
\begin{equation}\label{eq-gov-3}
\begin{cases}
\pa_t^\al(U-a)+\eta_1U-\mu_1V=F,\\
\pa_t^\be(V-b)-\mu_2U+\eta_2V=G
\end{cases}\mbox{in }\BR_+,
\end{equation}
where $a,b,\eta_1,\eta_2,\mu_1,\mu_2\in\BR$ and $F,G\in L_{\mathrm{loc}}^2(\BR_+)$. Similarly to its partial differential counterpart, one can define the mild solution for \eqref{eq-gov-3} and carry out an iteration argument to show the unique existence of the mild solution in the same manner as that in \cite{LHL23}. We establish the maximum principle of \eqref{eq-gov-3} in the next lemma.

\begin{lemma}\label{lem-max}
Assume $\mu_2>0,$ $a,b,\eta_1,\eta_2,\mu_1,\mu_2\ge0$ and $0\le F,G\in L_{\mathrm{loc}}^2(\BR_+)$. Then there exists a unique solution $(U,V)\in L_{\rm loc}^2(\BR_+)$ to \eqref{eq-gov-3} such that $U\ge0,V\ge0$ in $\BR_+$. Especially, if we further assume $a>0,$ then $U>0,V>0$ in $\BR_+$.
\end{lemma}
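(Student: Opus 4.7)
The plan is a Picard iteration scheme built on the variation-of-constants formula for scalar fractional ODEs, combined with the positivity of the Mittag-Leffler functions provided by Lemma \ref{lem-ML}(ii). Treating each equation of \eqref{eq-gov-3} as a scalar fractional ODE with the coupling contribution playing the role of a forcing term, the mild formulation reads
\begin{align*}
U(t) &= a\,E_{\al,1}(-\eta_1 t^\al) + \int_0^t (t-\tau)^{\al-1}E_{\al,\al}\bigl(-\eta_1(t-\tau)^\al\bigr)\bigl[F(\tau)+\mu_1 V(\tau)\bigr]\rd\tau,\\
V(t) &= b\,E_{\be,1}(-\eta_2 t^\be) + \int_0^t (t-\tau)^{\be-1}E_{\be,\be}\bigl(-\eta_2(t-\tau)^\be\bigr)\bigl[G(\tau)+\mu_2 U(\tau)\bigr]\rd\tau.
\end{align*}
Unique existence of $(U,V)\in L_{\mathrm{loc}}^2(\overline{\BR_+})$ follows from a contraction argument on short time intervals which are then concatenated, carried out in the same spirit as the PDE analysis of \cite{LHL23}.

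For the non-negativity conclusion, I would define Picard iterates by $U^{(0)}\equiv V^{(0)}\equiv 0$ and let $U^{(n+1)},V^{(n+1)}$ be obtained from the right-hand sides above with $V,U$ replaced by $V^{(n)},U^{(n)}$, respectively. Under the sign hypotheses $a,b,\eta_1,\eta_2,\mu_1\ge0$, $\mu_2>0$ and $F,G\ge0$, every kernel factor in the convolutions is strictly positive by Lemma \ref{lem-ML}(ii), so a routine induction on $n$ yields $U^{(n)},V^{(n)}\ge0$ pointwise. Passing to the $L_{\mathrm{loc}}^2$-limit (along a subsequence) preserves the inequality almost everywhere, which gives $U,V\ge0$ in $\overline{\BR_+}$.

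To upgrade to strict positivity under the additional assumption $a>0$, I would feed the established non-negativity of $V$ back into the representation of $U$ to obtain $U(t)\ge a\,E_{\al,1}(-\eta_1 t^\al)>0$ for every $t\ge0$. Substituting this lower bound into the formula for $V$ and invoking $\mu_2>0$ together with the strict positivity of $E_{\be,\be}(-\eta_2 s^\be)$, the convolution integral becomes strictly positive for every $t>0$, yielding $V(t)>0$ on $\BR_+$. The only delicate point throughout is the boundary case $\al=1$, in which $E_{1,1}(-x)=\e^{-x}$ and the Mittag-Leffler representation degenerates into the classical exponential semigroup; since positivity of all relevant kernels persists, the iterative construction and every positivity step go through unchanged. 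A secondary concern is ensuring that the weakly singular convolutions with kernels $(t-\tau)^{\al-1}$ and $(t-\tau)^{\be-1}$ actually produce iterates lying in $L_{\mathrm{loc}}^2(\overline{\BR_+})$ when $\al,\be<1$, but this is immediate from Young's convolution inequality and is already embedded in the mild-solution framework of \cite{LHL23}.
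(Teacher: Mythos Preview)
Your proposal is correct and follows essentially the same route as the paper: Picard iteration on the mild (variation-of-constants) formulation, with non-negativity of the iterates inherited from the positivity of $E_{\al,1},E_{\al,\al},E_{\be,1},E_{\be,\be}$ in Lemma~\ref{lem-ML}(ii), and strict positivity obtained by feeding $U(t)\ge a\,E_{\al,1}(-\eta_1 t^\al)>0$ back into the $V$-equation via $\mu_2>0$. The only cosmetic difference is that the paper establishes convergence of the iteration by an explicit bound $|(\bm U_{m+1}-\bm U_m)(t)|\le C_K^{m-1}J^{(m-1)\be}|\bm U_1(t)|$ together with Gamma-function asymptotics, whereas you invoke a short-time contraction and concatenation; both are standard and lead to the same conclusion.
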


\begin{proof}
First we rewrite \eqref{eq-gov-3} as
\[
\begin{cases}
\pa_t^\al(U-a)+\eta_1U=F+\mu_1V,\\
\pa_t^\be(V-b)+\eta_2V=G+\mu_2U
\end{cases}\mbox{in }\BR_+.
\]
Employing the Mittag-Leffler function, it can be easily seen that $(U,V)$ satisfies the integral equation
\begin{equation}\label{eq-inteq}
\left\{\begin{aligned}
U(t) & =U_1(t)+\mu_1\int_0^t\tau^{\al-1}E_{\al,\al}(-\eta_1\tau^\al)V(t-\tau)\,\rd\tau,\\
V(t) & =V_1(t)+\mu_2\int_0^t\tau^{\be-1}E_{\be,\be}(-\eta_2\tau^\be)U(t-\tau)\,\rd\tau,
\end{aligned}\right.
\end{equation}
where
\begin{equation}\label{eq-UV0}
\left\{\begin{aligned}
U_1(t) & :=a\,E_{\al,1}(-\eta_1t^\al)+\mu_1\int_0^t\tau^{\al-1}E_{\al,\al}(-\eta_1\tau^\al)F(t-\tau)\,\rd\tau,\\
V_1(t) & :=b\,E_{\be,1}(-\eta_2t^\be)+\mu_2\int_0^t\tau^{\be-1}E_{\be,\be}(-\eta_2\tau^\be)G(t-\tau)\,\rd\tau.
\end{aligned}\right.
\end{equation}
Starting from $(U_0,V_0)=(0,0)$, we iteratively define a sequence $\{(U_m,V_m)\}_{m=2}^\infty$ by
\begin{equation}\label{eq-UVm}
\left\{\begin{aligned}
U_{m+1}(t) & =U_1(t)+\mu_1\int_0^t\tau^{\al-1}E_{\al,\al}(-\eta_1\tau^\al)V_m(t-\tau)\,\rd\tau,\\
V_{m+1}(t) & =V_1(t)+\mu_2\int_0^t\tau^{\be-1}E_{\be,\be}(-\eta_2\tau^\be)U_m(t-\tau)\,\rd\tau,
\end{aligned}\right.\quad m=0,1,\dots.
\end{equation}
Our strategy is to show the convergence of $\{U_m\}_{m=0}^\infty$ and $\{V_m\}_{m=0}^\infty$ under some suitable norm, whose limit is indeed a solution of \eqref{eq-gov-3}. To this end, we denote $\bm U_m:=(U_m,V_m)^\T$, $m=0,1,\dots$ and define the operator $K$ as follows:
$$
K (U(t),V(t))^{\rm T} := \left(\mu_1 \int_0^t \tau^{\alpha-1} E_{\alpha,\alpha}(-\eta_1 \tau^\alpha) V(t-\tau)\,\rd\tau, \mu_2 \int_0^t \tau^{\beta-1} E_{\beta,\beta}(-\eta_2\tau^\beta) U(t-\tau)\,\rd\tau \right)^{\rm T}.
$$
Therefore, \eqref{eq-UVm} can be rephrased as
$$
\bm U_{m+1}(t)=\bm U_1(t)+K\bm U_m(t),\quad t\in \BR_+.
$$
Next, we shall give some estimates for $\bm U_m$ in some appropriate function space. Letting $T>0$ be fixed arbitrarily, by means of Lemma \ref{lem-ML}, we assert that $\bm U_m\in L^2(0,T)$ for $m=0,1,\dots$ and there exists a constant $C_K$ such that
\begin{equation}\label{eq-est-um}
|(\bm U_{m+1}-\bm U_m)(t)|\le C_K^{m-1}J^{(m-1)\be}|\bm U_1(t)|,\quad t\in(0,T).
\end{equation}

We will prove \eqref{eq-est-um} inductively. For $m=1$, from the definition of the function $\bm U_m$, the above estimate is trivial. By noting the assumption on the coefficients and the estimate in Lemma \ref{lem-ML}, we can directly obtain
\begin{align*}
|\bm U_1(t)| & \le\f{C|a|}{1+\eta_1t^\al}+\f{C|b|}{1+\eta_2t^\be}+C\mu_1\int_0^t\f{\tau^{\al-1}}{{1+\eta\tau^\al}}|f(t-\tau)|\,\rd\tau+C\mu_2\int_0^t\f{\tau^{\be-1}}{1+\eta\tau^\be}|g(t-\tau)|\,\rd\tau\\
& \le C\left(|a|+|b|+\int_0^t \tau^{\beta-1} (| f(t-\tau)| + |g(t-\tau)|\,\rd\tau \right)\\
& \le C(|a|+|b|)+C_K J^\beta (|f| + |g|)(t),
\quad t\in(0,T),
\end{align*}
where the last inequality is due to the fact that $t^{\al-1}<T^{\al-\be}t^{\be-1}$ if $\al>\be$ and $t\in(0,T)$ and the constants $C,C_K>0$ change by lines and depend on $\eta_1,\eta_2,\mu_1,\mu_2,\alpha,\beta$ and $T$. We combine the above inequalities with Young's convolution inequality to derive $\bm U_1\in L^2(0,T)$. 

Next, for any $m=2,3,\dots$ and $\bm U\in L^2(0,T)$, we can follow the above treatment to get
\begin{align*}
|K\bm U(t)| &\le  C_K J^\beta |\bm U(t)|,
\end{align*}
where the constant $C_K>0$ depends on $\eta_1,\eta_2,\mu_1,\mu_2,\alpha,\beta$ and $T$. Now we use the inductive assumption to derive
\begin{align*}
|(\bm U_{m+1}-\bm U_m)(t)| & =|K(\bm U_m-\bm U_{m-1})(t)|\\
& \le C_K J^{\be-1}C_K^{m-2}J^{(m-2)\be}|\bm U_1(t)|\\
& =C_K^{m-1}J^{(m-1)\be}|\bm U_1(t)|
\end{align*}
in view of the semi-group property $J^{\ga_1}J^{\ga_2}=J^{\ga_1+\ga_2}$ for any $\ga_1,\ga_2>0$. By the induction argument, we see that the estimate \eqref{eq-est-um} holds for any $m=1,2,\dots$. Again, from Young's convolution inequality, it follows that $\bm U_m=\sum_{j=0}^m(\bm U_{j+1}-\bm U_j)\in L^2(0,T)$ for any $m=1,2,\dots$. 

Moreover, for any integer $m_0$ satisfying $m_0\be>1/2$, on the basis of the above estimates, we further see that 
\begin{align*}
|(\bm U_{m+1}-\bm U_m)(t)| & \le C_K^{m-1}J^{(m-1)\be}\left(C(|a|+|b|)+C_K J^\be(|f|+|g|)(t)\right)\\
& =C(|a|+|b|)\f{C_K^{m-1}t^{(m-1)\be}}{\Ga((m-1)\be)}+C_K^m J^{(m-m_0)\be}J^{m_0\be}(|f|+|g|)(t),
\end{align*}
which, combined with the estimate
$$
J^{m_0\be}(|f|+|g|)(t)\le C\,t^{m_0\be-1/2}(\|f\|+\|g\|),\quad t\in(0,T)
$$
in view of the Cauchy-Schwartz inequality, implies that
\[
|(\bm U_{m+1}-\bm U_m)(t)|\le C(|a|+|b|)\f{C_K^{m-1}t^{(m-1)\be}}{\Ga(m\be-\be)}+C(\|f\|+\|g\|)\f{C_K^m\Ga(m_0\be+1/2)t^{m\be-1/2}}{\Ga(m\be+1/2)}
\]
for $m\ge m_0$. Now from the above estimates and the classical asymptotics
\[
\Ga(\eta)=\e^{-\eta}\eta^{\eta-1/2}(2\pi)^{1/2}\left(1+O\left(\f1\eta\right)\right)\quad\mbox{as }\eta\to+\infty
\]
(e.g., Abramowitz and Stegun \cite{AS72}, p.257), it follows that the series $\sum_{m=m_0}^\infty(\bm U_{m+1}-\bm U_m)$ is uniformly convergent. Now we assume that the limit of $\bm U_m$ is $\bm U$ as $m\to\infty$. Moreover, it is not difficult to check that $\bm U_1\ge\bm0$, that is,
\begin{align*}
& a\,E_{\al,1}(-\eta_1t^\al)+\int_0^t\tau^{\al-1}E_{\al,\al}(-\eta_1\tau^\al)F(t-\tau)\,\rd\tau\ge0,\\
& b\,E_{\be,1}(-\eta_2t^\be)+\int_0^t\tau^{\be-1}E_{\be,\be}(-\eta_2\tau^\be)G(t-\tau)\,\rd\tau\ge0,
\end{align*}
which implies $\bm U_m\ge\bm0$ for any $t\in(0,T]$ and hence the limit function $\bm U\ge\bm0$. 

Finally, we show that $\bm U$ must be strictly positive in $(0,T)$ if $a>0$. For this, we note that $\bm U$ satisfies the integral equation
$$
\bm U(t)=\bm U_1(t)+K\bm U(t),\quad t\in(0,T).
$$
Moreover, from the facts in Lemma \ref{lem-ML}, it follows that $a\,E_{\al,1}(-\eta_1t^\al)>0$. Collecting all the above results, we conclude that $\bm U(t)>\bm0$ for any $t>0$. The proof of the lemma is complete.
\end{proof}

\subsection{Asymptotic behavior of fractional ordinary system}

Next, we concentrate on a special case of \eqref{eq-gov-3}:
\begin{equation}\label{eq-gov-2}
\begin{cases}
\pa_t^\al(U-1)+c_1U-c_2V=0,\\
\pa_t^\be V-c_2U+c_1V=0
\end{cases}\mbox{in }\BR_+,
\end{equation}
that is,
\[
a=1,\quad b=F=G=0,\quad\eta_1=\eta_2=c_1,\quad\mu_1=\mu_2=c_2
\]
in \eqref{eq-gov-3}. Then according to Lemma \ref{lem-max}, the solution $(U,V)$ to \eqref{eq-gov-2} should be strictly positive in $\BR_+$ if $c_1\ge0,c_2>0$. In the next lemma, we provide a long-time decay estimate for \eqref{eq-gov-2} under a certain assumption on $c_1,c_2$.

\begin{lemma}\label{lem-asymp}
If $c_1>c_2>0,$ then there exists a constant $C>0$ depending only on $\al,\be,c_1,c_2$ such that the unique solution $(U,V)$ to \eqref{eq-gov-2} satisfies
$$
0<U(t)+V(t)\le\begin{cases}
C\,t^{-\al}, & \al<1,\\
C\,t^{-(1+\be)}, & \al=1,
\end{cases}\quad\forall\,t>1.
$$
\end{lemma}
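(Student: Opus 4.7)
The plan is to apply the Laplace transform to reduce \eqref{eq-gov-2} to an algebraic system in the complex plane and then invert it via the contour deformation of Lemma \ref{lem-residue1} to extract the long-time asymptotics. Since $U(0)=1,$ $V(0)=0$ and $\pa_t^\al(U-1)$ transforms to $s^\al\widehat U(s)-s^{\al-1},$ one obtains
\begin{align*}
(s^\al+c_1)\widehat U-c_2\widehat V &= s^{\al-1},\\
-c_2\widehat U+(s^\be+c_1)\widehat V &= 0,
\end{align*}
and solving yields
\[
\widehat U(s)=\f{s^{\al-1}(s^\be+c_1)}{P(s)},\qquad\widehat V(s)=\f{c_2\,s^{\al-1}}{P(s)},\qquad P(s):=(s^\al+c_1)(s^\be+c_1)-c_2^2.
\]

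Next I would verify the hypotheses of Lemma \ref{lem-residue1} for $g(s)=\e^{st}\widehat U(s)$ and $g(s)=\e^{st}\widehat V(s).$ The decay requirement is immediate from $|\widehat U(s)|=O(|s|^{-1})$ and $|\widehat V(s)|=O(|s|^{-1-\be})$ as $|s|\to\infty.$ To locate the poles, I would show that all zeros of $P$ lie in the open left half-plane: when $\mathrm{Re}(s)\ge0$ and $s\ne0,$ $|\arg s|\le\pi/2$ together with $0<\al,\be\le1$ forces $\mathrm{Re}(s^\al),\mathrm{Re}(s^\be)\ge0,$ so $|s^\al+c_1|,|s^\be+c_1|>c_1$ and hence $|(s^\al+c_1)(s^\be+c_1)|>c_1^2>c_2^2.$ A direct case analysis on the two-sided limits $s\to r\e^{\pm\ri\,\pi}$ rules out the accumulation of zeros on the branch cut. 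Lemma \ref{lem-residue1} then gives
\[
U(t)=\sum_k\res_{s=z_k}\bigl(\e^{st}\widehat U(s)\bigr)+\f1{2\pi\,\ri}\int_\ga\e^{st}\widehat U(s)\,\rd s,
\]
and analogously for $V(t).$ Since every pole $z_k$ has $\mathrm{Re}(z_k)<0,$ the residue sum is bounded by $C\e^{-\de t}$ with some $\de>0$ and is negligible against any polynomial decay for $t>1.$

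It remains to control the Hankel integral along $\ga=\{r\e^{\pm\ri\,\pi}:r>0\}.$ After parameterising $s=r\e^{\pm\ri\,\pi}$ and combining the two branches, the integrand reduces to $\e^{-rt}$ times the jump of $\widehat U$ across the cut, and its long-time behaviour is governed by the leading non-analytic term of $\widehat U$ at $s=0.$ Writing $\De:=c_1^2-c_2^2>0,$ a Taylor-type expansion of $P(s)^{-1}$ at $s=0$ gives
\[
\widehat U(s)=\f{c_1}\De\,s^{\al-1}+O(s^{\al+\be-1})\qquad(s\to0)
\]
when $\al<1,$ while when $\al=1$ the prefactor $s^{\al-1}=1$ is analytic and a second-order expansion yields
\[
\widehat U(s)=\f{c_1}\De-\f{c_2^2}{\De^2}\,s^\be+O(s)+O(s^{2\be})\qquad(s\to0).
\]
The corresponding jumps across the cut are of order $r^{\al-1}$ and $r^\be$ respectively; coupled with $\int_0^\infty\e^{-rt}r^{p-1}\,\rd r=\Ga(p)\,t^{-p}$ they deliver $|U(t)|\le C\,t^{-\al}$ for $\al<1$ and $|U(t)|\le C\,t^{-(1+\be)}$ for $\al=1.$ The parallel expansion of $\widehat V(s)=c_2s^{\al-1}/P(s)$ produces the same rates, and the strict positivity $U,V>0$ from Lemma \ref{lem-max} prevents cancellation in the sum $U+V.$

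The main obstacle is making this leading-order-plus-remainder picture rigorous: the error terms in the expansions of $\widehat U,\widehat V$ near $s=0$ must produce integrals of strictly smaller order than the leading $t^{-\al}$ or $t^{-(1+\be)},$ which calls for uniform estimates of $\widehat U(r\e^{\pm\ri\,\pi}),\widehat V(r\e^{\pm\ri\,\pi})$ on the whole half-line $r>0$ (the factor $\e^{-rt}$ then controls the large-$r$ tail). The genuinely novel feature in the case $\al=1$ is the disappearance of the $s^{\al-1}$ branch factor: its replacement by the milder singularity $s^\be$ is exactly what lifts the decay from the sublinear rate $t^{-\al}$ to the superlinear rate $t^{-(1+\be)}$ announced in the statement.
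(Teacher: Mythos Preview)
Your overall framework matches the paper's: Laplace transform, contour deformation via Lemma \ref{lem-residue1}, exponential decay of the residue sum, and extraction of the polynomial rate from the Hankel integral along the cut. Where you diverge is in the treatment of that Hankel integral. You identify the leading non-analytic term of $\widehat U,\widehat V$ at $s=0$ by Taylor expansion and defer the remainder control; the paper instead computes the jump across the cut \emph{exactly}. It writes
\[
\rIm\bigl[\widehat V(r\e^{\ri\,\pi})\bigr]=-c_2 r^{\al-1}\,\f{\rIm(\e^{\ri\,\al\pi}\ov{q(r)})}{|q(r)|^2},\qquad
\rIm\bigl[\widehat U(r\e^{\ri\,\pi})\bigr]=-r^{\al-1}\,\f{\rIm(p(r)\ov{q(r)})}{|q(r)|^2},
\]
proves a uniform lower bound $|q(r)|^2\ge\de>0$ on $[0,\infty)$ (this is exactly where $c_1>c_2$ is used, via $|q(0)|^2=(c_1^2-c_2^2)^2$), and expands the numerators as explicit trigonometric polynomials in $r^\al,r^\be$. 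The $\al=1$ mechanism is then transparent without any asymptotic matching: every monomial in the numerator carrying a factor $\sin\al\pi$ drops out, leaving only terms of order $r^\be$ or higher, which after multiplication by $r^{\al-1}\e^{-rt}$ and integration give $t^{-(1+\be)}$.

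Your expansion-at-the-origin viewpoint is more conceptual and correctly isolates \emph{why} the rate jumps when $\al=1$, but as you acknowledge, it leaves the remainder estimate open. The paper's explicit computation closes precisely that gap: because the imaginary part is written exactly as a short polynomial in $r$ divided by a quantity bounded below by $\de$, the bound $|\rIm(\cdot)|\le C(1+r^\be+r^{\al+\be})$ (respectively $C(r^\be+r^{1+\be})$ when $\al=1$) holds \emph{uniformly} on $r>0$, and a single application of $\int_0^\infty\e^{-rt}r^p\,\rd r=\Ga(p+1)t^{-(p+1)}$ finishes the job without splitting the integral.
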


\begin{proof}
Taking the Laplace transform $\wh f(s)=\int_{\BR_+}\e^{-s t}f(t)\,\rd t$ on both sides of \eqref{eq-gov-2} yields
\[
\begin{cases}
s^\al\wh U+c_1\wh U-c_2\wh V=s^{\al-1},\\
s^\be\wh V+c_1\wh V-c_2\wh U=0
\end{cases}
\]
and hence
\begin{equation}\label{eq-LapUV}
\wh U(s)=\f{s^{\al-1}(s^\be+c_1)}{(s^\al+c_1)(s^\be+c_1)-c_2^2},\quad\wh V(s)=\f{c_2s^{\al-1}}{(s^\al+c_1)(s^\be+c_1)-c_2^2}.
\end{equation}
By taking the inverse Laplace transform, we obtain
$$
U(t)=\f1{2\pi\,\ri}\int_{s_0-\ri\,\infty}^{s_0+\ri\,\infty}\wh U(s)\,\e^{s t}\,\rd s,\quad V(t)=\f1{2\pi\,\ri}\int_{s_0-\ri\,\infty}^{s_0+\ri\,\infty}\wh V(s)\,\e^{s t}\,\rd s,
$$
where $s_0>0$ is some constant.

It is easy to verify that the denominator of $\wh U(s),\wh V(s)$ are multi-valued on the complex plane. Moreover, the roots of the denominator of $\wh U(s),\wh V(s)$ are distributed in the left half of the complex plane excluding the negative real axis. Thus we can cut through the negative real axis to get a single valued holomorphic function. Denoting the poles of $\wh U(s),\wh V(s)$ as $z_1,\dots,z_N$, we take advantage of Lemma \ref{lem-residue1} to deduce
\begin{align*}
U(t) & =\sum_{k=1}^N\res_{s=z_k}\wh U(s)\,\e^{s t}+\f1{2\pi\,\ri}\oint_\ga\wh U(s)\,\e^{s t}\,\rd s=:I_1(t)+I_2(t),\\
V(t) & =\sum_{k=1}^N\res_{s=z_k}\wh V(s)\,\e^{s t}+\f1{2\pi\,\ri}\oint_\ga\wh V(s)\,\e^{s t}\,\rd s=:I_3(t)+I_4(t).
\end{align*}
It is obvious that $I_1(t)$ and $I_3(t)$ are of exponential decay as $t\to+\infty$. For $I_2(t)$ and $I_4(t)$, it can be easily proved that
\begin{align}
I_2(t)=\f1\pi\int_{\BR_+}\rIm\left[\wh U(s)\,\e^{s t}\right]_{s=r\,\e^{\ri\,\pi}}\rd r=\f1\pi\int_{\BR_+}\e^{-r t}\,\rIm\left[\wh U(s)\right]_{s=r\,\e^{\ri\,\pi}}\rd r,\label{eq-I2}\\
I_4(t)=\f1\pi\int_{\BR_+}\rIm\left[\wh V(s)\,\e^{s t}\right]_{s=r\,\e^{\ri\,\pi}}\rd r=\f1\pi\int_{\BR_+}\e^{-r t}\,\rIm\left[\wh V(s)\right]_{s=r\,\e^{\ri\,\pi}}\rd r.\label{eq-I4}
\end{align}
Since $\wh V(s)$ is slightly simpler than $\wh U(s)$, we shall deal with $I_4(t)$ first and then $I_2(t)$.\medskip

To deal with the integrand of $I_4(t)$, the substitution of \eqref{eq-LapUV} into $\rIm[\wh V(s)]_{s=r\,\e^{\ri\,\pi}}$ results in
\begin{align}
\rIm\left[\wh V(s)\right]_{s=r\,\e^{\ri\,\pi}} & =\rIm\left[\f{c_2s^{\al-1}}{(s^\al+c_1)(s^\be+c_1)-c_2^2}\right]_{s=r\,\e^{\ri\,\pi}}\nonumber\\
& =c_2\,\rIm\left[\f{r^{\al-1}\,\e^{\ri\,(\al-1)\pi}}{\{r^\al(\cos\al\pi+\ri\sin\al\pi)+c_1\}\{r^\be(\cos\be\pi+\ri\sin\be\pi)+c_1\}-c_2^2}\right]\nonumber\\
& =-c_2r^{\al-1}\,\rIm\left[\f{\e^{\ri\,\al\pi}}{q(r)}\right]=-c_2r^{\al-1}\f{\rIm(\e^{\ri\,\al\pi}\ov{q(r)})}{|q(r)|^2},\label{eq-imV}
\end{align}
where
\begin{align}
q(r) & =\{(c_1+r^\al\cos\al\pi)+\ri\,r^\al\sin\al\pi\}\{(c_1+r^\be\cos\be\pi)+\ri\,r^\be\sin\be\pi\}-c_2^2\nonumber\\
& =\left\{(c_1^2-c_2^2)+c_1(r^\al\cos\al\pi+r^\be\cos\be\pi)+r^{\al+\be}\cos(\al+\be)\pi\right\}\nonumber\\
& \quad\,+\ri\left\{c_1(r^\al\sin\al\pi+r^\be\sin\be\pi)+r^{\al+\be}\sin(\al+\be)\pi\right\}.\label{eq-def-q}
\end{align}
We claim that there exists a constant $\de>0$ such that
\begin{equation}\label{eq-pos-q}
|q(r)|^2\ge\de>0,\quad\forall\,r\ge0.
\end{equation}
In fact, since $\wh V(s)$ has no pole on the negative real axis, it is impossible for $q(r)$ to have zero points in $\BR_+$. For $r=0$, it follows from the assumption $c_1>c_2>0$ that $|q(0)|^2=(c_1^2-c_2^2)^2>0$. Meanwhile, it is readily seen from the above expression \eqref{eq-def-q} of $q(r)$ that
\[
|q(r)|^2=O(r^{2(\al+\be)})\longrightarrow+\infty\quad\mbox{as }r\to+\infty.
\]
These facts, together with the continuity of $q(r)$, indicate \eqref{eq-pos-q} immediately.

Next, we further calculate
\begin{align*}
\rIm\left(\e^{\ri\,\al\pi}\ov{q(r)}\right) & =\sin\al\pi\left\{(c_1^2-c_2^2)+c_1(r^\al\cos\al\pi+r^\be\cos\be\pi)+r^{\al+\be}\cos(\al+\be)\pi\right\}\\
& \quad\,-\cos\al\pi\left\{c_1(r^\al\sin\al\pi+r^\be\sin\be\pi)+r^{\al+\be}\sin(\al+\be)\pi\right\}\\
& =(c_1^2-c_2^2)\sin\al\pi+c_1r^\be\sin(\al-\be)\pi-r^{\al+\be}\sin\be\pi
\end{align*}
and hence
\[
\left|\rIm\left(\e^{\ri\,\al\pi}\ov{q(r)}\right)\right|\le\begin{cases}
C(r^\be+r^{1+\be}), & \al=1,\\
C(1+r^\be+r^{\al+\be}), & \al<1,
\end{cases}\quad\forall\,r\ge0.
\]
Plugging the above inequality and \eqref{eq-pos-q} into \eqref{eq-imV} and then \eqref{eq-I4}, we can estimate $I_4(t)$ as
\begin{align*}
|I_4(t)| & \le\f1\pi\int_{\BR_+}\e^{-r t}\left|\rIm\left[\wh V(s)\right]_{s=r\,\e^{\ri\,\pi}}\right|\rd r\le\f{c_2}\pi\int_{\BR_+}\e^{-r t}r^{\al-1}\f{|\rIm(\e^{\ri\,\al\pi}\ov{q(r)})|}{|q(r)|^2}\,\rd r\\
& \le\f{c_2}{\pi\de}\int_{\BR_+}\e^{-r t}r^{\al-1}\left|\rIm\left(\e^{\ri\,\al\pi}\ov{q(r)}\right)\right|\rd r\le\left\{\begin{alignedat}{2}
& C\int_{\BR_+}\e^{-r t}(r^\be+r^{1+\be})\,\rd r, & \quad & \al=1,\\
& C\int_{\BR_+}\e^{-r t}r^{\al-1}(1+r^\be+r^{\al+\be})\,\rd r, & \quad & \al<1
\end{alignedat}\right.\\
& \le\begin{cases}
C\,t^{-(1+\be)}, & \al=1,\\
C\,t^{-\al}, & \al<1,
\end{cases}\quad\forall\,t>1,
\end{align*}
where the Gamma function is employed  to calculate, e.g.
\[
\int_{\BR_+}\e^{-r t}r^\be\,\rd r=\int_{\BR_+}\e^{-\tau}\left(\f\tau t\right)^\be\,\f{\rd\tau}t=\Ga(1+\be)\,t^{-(1+\be)}.
\]
Combining with the exponential decay of $I_3(t)$, we arrive at
\begin{equation}\label{eq-decay-V}
0<V(t)\le\begin{cases}
C\,t^{-(1+\be)}, & \al=1,\\
C\,t^{-\al}, & \al<1,
\end{cases}\quad\forall\,t>1,
\end{equation}
where the strict positivity of $V(t)$ is guaranteed by Lemma \ref{lem-max}.\medskip

On the other hand, the calculation of $I_2(t)$ is more complicated, but the methodology is identical. Substituting \eqref{eq-LapUV} into $\rIm[\wh U(s)]_{s=r\,\e^{\ri\,\pi}}$ leads to
\begin{align}
\rIm\left[\wh U(s)\right]_{s=r\,\e^{\ri\,\pi}} & =\rIm\left[\f{s^{\al-1}(s^\be+c_1)}{(s^\al+c_1)(s^\be+c_1)-c_2^2}\right]_{s=r\,\e^{\ri\,\pi}}\nonumber\\
& =r^{\al-1}\,\rIm\left[\f{(\cos\al\pi+\ri\sin\al\pi)\{r^\be(\cos\be\pi+\ri\sin\be\pi)+c_1\}}{\{r^\al(\cos\al\pi+\ri\sin\al\pi)+c_1\}\{r^\be(\cos\be\pi+\ri\sin\be\pi)+c_1\}-c_2^2}\right]\nonumber\\
& =-r^{\al-1}\,\rIm\left[\f{p(r)}{q(r)}\right]=-r^{\al-1}\f{\rIm(p(r)\ov{q(r)})}{|q(r)|^2},\label{eq-imU}
\end{align}
where $q(r)$ is the same as \eqref{eq-def-q} and
\begin{align*}
p(r) & =(\cos\al\pi+\ri\sin\al\pi)\left\{(c_1+r^\be\cos\be\pi)+\ri\sin\be\pi\right\}\\
& =\left\{c_1\cos\al\pi+r^\be\cos(\al+\be)\pi\right\}+\ri\left\{c_1\sin\al\pi+r^\be\sin(\al+\be)\pi\right\}.
\end{align*}
In a same manner as before, we calculate
\begin{align*}
& \quad\,\rIm\left(p(r)\ov{q(r)}\right)\\
& =\left\{c_1\sin\al\pi+r^\be\sin(\al+\be)\pi\right\}\left\{(c_1^2-c_2^2)+c_1(r^\al\cos\al\pi+r^\be\cos\be\pi)+r^{\al+\be}\cos(\al+\be)\pi\right\}\\
& \quad\,-\left\{c_1\cos\al\pi+r^\be\cos(\al+\be)\pi\right\}\left\{c_1(r^\al\sin\al\pi+r^\be\sin\be\pi)+r^{\al+\be}\sin(\al+\be)\pi\right\}\\
& =c_1(c_1^2-c_2^2)\sin\al\pi+\left\{c_1^2\sin(\al-\be)\pi+(c_1^2-c_2^2)\sin(\al+\be)\pi\right\}r^\be+c_1(\sin\al\pi)r^{2\be}
\end{align*}
and hence
\[
\left|\rIm\left(p(r)\ov{q(r)}\right)\right|\le\begin{cases}
C\,r^\be, & \al=1,\\
C(1+r^\be+r^{2\be}), & \al<1,
\end{cases}\quad\forall\,r\ge0.
\]
The above inequality is then combined with \eqref{eq-pos-q} and \eqref{eq-imU} to estimate $I_2(t)$ in \eqref{eq-I2} as
\begin{align*}
|I_2(t)| & \le\f1{\pi\de}\int_{\BR_+}\e^{-r t}r^{\al-1}\left|\rIm\left(p(r)\ov{q(r)}\right)\right|\rd r\le\left\{\begin{alignedat}{2}
& C\int_{\BR_+}\e^{-r t}r^\be\,\rd r, & \quad & \al=1,\\
& C\int_{\BR_+}\e^{-r t}r^{\al-1}(1+r^\be+r^{2\be})\,\rd r, & \quad & \al<1
\end{alignedat}\right.\\
& \le\begin{cases}
C\,t^{-(1+\be)}, & \al=1,\\
C\,t^{-\al}, & \al<1,
\end{cases}\quad\forall\,t>1.
\end{align*}
Following the same line as before, we can conclude \eqref{eq-decay-V} for $U(t)$, which completes the proof of Lemma \ref{lem-asymp}.
\end{proof}

\subsection{Completion of the proof of Theorem \ref{thm-asymp}}

Now we have collected all necessary ingredients to finish the proof of Theorem \ref{thm-asymp}.

In the sequel, we denote $U(t):=\|u(t)\|$ and $V(t):=\|v(t)\|$. Taking the $L^2(\Om)$ inner product on both sides of the first governing equation in \eqref{eq-gov} with $u$, we utilize Lemma \ref{lem-coer} and assumptions \eqref{eq-assume1}--\eqref{eq-assume2} to estimate
\begin{align*}
0 & =(u(t),\pa_t^\al(u(t)-u_0)-\rdiv(\bm A\nb u(t))+c_{11}(t)u(t)+c_{12}(t)v(t))\\
& \ge U(t)\,\pa_t^\al(U(t)-U(0))+\int_\Om\bm A\nb u(t)\cdot\nb u(t)\,\rd\bm x-\|c_{12}\|_{L^\infty(\Om\times\BR_+)}|(u(t),v(t))|\\
& \ge U(t)\,\pa_t^\al(U(t)-U(0))+\ka_0\|\nb u(t)\|^2-\ka_1|(u(t),v(t))|\\
& \ge U(t)\,\pa_t^\al(U(t)-U(0))+\f{\ka_0}{C_\Om^2}U^2(t)-\ka_1U(t)V(t),\quad t>0,
\end{align*}
where $C_\Om$ and $\ka_1$ are the constants introduced in \eqref{eq-Poincare} and \eqref{eq-assume0}, respectively. From $U(t)\ge0$, it follows that
\begin{equation}\label{ineq-U}
\pa_t^\al(U-U(0))+\f{\ka_0}{C_\Om^2}U-\ka_1V\le0\quad\mbox{in }\BR_+.
\end{equation}
Repeating the same procedure for the governing equation of $v$ in \eqref{eq-gov} implies that
\begin{equation}\label{ineq-V}
\pa_t^\be V+\f{\ka_0}{C_\Om^2}V-\ka_1U\le0\quad\mbox{in }\BR_+,
\end{equation}
where $V(0)=0$ by assumption.

As $(U,V)$ satisfies the coupled system \eqref{ineq-U}--\eqref{ineq-V} of fractional ordinary differential inequalities, for clarity, we introduce an auxiliary fractional ordinary differential system
\[
\left\{\begin{aligned}
& \pa_t^\al(\ov U-1)+\f{\ka_0}{C_\Om^2}\ov U-\ka_1\ov V=0,\\
& \pa_t^\al\ov V+\f{\ka_0}{C_\Om^2}\ov V-\ka_1\ov U=0
\end{aligned}\quad\mbox{in }\BR_+.
\right.
\]
Then Lemma \ref{lem-max} and the linearity of the problem indicate
\[
U\le U(0)\ov U,\quad V\le U(0)\ov V\quad\mbox{in }\BR_+.
\]
On the other hand, due to \eqref{eq-assume0}, the key assumption in Lemma \ref{lem-asymp} is satisfied, hence it can be inferred that
\[
\ov U(t)+\ov V(t)\le\begin{cases}
C\,t^{-\al}, & \al<1,\\
C\,t^{-(1+\be)}, & \al=1,
\end{cases}\quad\forall\,t>1.
\]
Consequently, the above two inequalities imply the desired decay estimate \eqref{eq-asymp1} by returning to the original notations $\|u(t)\|=U(t)$, $\|v(t)\|=V(t)$ and $\|u_0\|=U(0)$. The proof of Theorem \ref{thm-asymp} is completed.

\section{Numerical schemes and stability analysis}\label{sec-scheme}

In this section, we propose semi-implicit and fully implicit numerical schemes for the initial-boundary value problem \eqref{eq-gov}. The establishment of these schemes plays a fundamental role not only in further numerical analysis for coupled subdiffusion systems, but also in possible applications to numerical reconstructions of corresponding inverse problems. Especially in the current work, since we are scrutinizing the long-time asymptotic behavior of the solution, it is also essential to carry out basic stability analysis accordingly in order to guarantee the feasibility of the proposed methods. Actually, the theoretical findings above were indeed inspired by the numerical experiments demonstrated in the next section.

To concentrate on the nonlocal effect in time, without loss of generality we keep it simple on the spatial direction to consider the following problem:
\begin{equation}\label{eq-gov2}
\begin{cases}
\begin{aligned}
& \pa_t^\al(u-u_0)-d_1u_{xx}+c_{11}(x,t)u+c_{12}(x,t)v=F_1(x,t),\\
& \pa_t^\be(v-v_0)-d_2v_{xx}+c_{21}(x,t)u+c_{22}(x,t)v=F_2(x,t)
\end{aligned}
& \mbox{in }(0,L)\times(0,T),\\
u=v=0 & \mbox{on }\{0,L\}\times(0,T),
\end{cases}
\end{equation}
where $L,d_1,d_2$ are positive constants. In other words, we restrict the spatial dimension $d=1$ and the principal coefficients $A(x),B(x)$ as constants. Meanwhile, the appearance of inhomogeneous terms $F_1(x,t),F_2(x,t)$ is allowed for the completeness of the numerical schemes, which will be set as zero in the study of  asymptotic behaviors. Also, the discussion is limited to the case of $\al<1$, that is, both orders are fractional, as that of $\al=1$ is classical.

For the time interval $[0,T]$, we perform an equidistant partition with the step size $\De t=T/N$ ($N=2,3,\dots$), and denote the grid points as $t_n:=n\De t$ ($n=0,1,\dots,N$). For the numerical computation, we adopt the original Caputo derivative $\rd_t^\al$ (see \eqref{eq-Caputo}) instead of $\pa_t^\al$ and recall the L1 approximation of $\rd_t^\al$ introduced in Lin and Xu \cite{LX07} for $h\in C^1[0,T]$:
\begin{align*}
\pa^\al_t(h(t_{n+1})-h(0)) & =\rd_t^\al h(t_{n+1})=\f1{\Ga(1-\al)}\sum_{j=0}^n\int_{t_j}^{t_{j+1}}\f{h'(s)}{(t_{n+1}-s)^\al}\,\rd s\\
& =\f1{\Ga(1-\al)}\sum_{j=0}^n\f{h(t_{j+1})-h(t_j)}{\De t}\int_{t_j}^{t_{j+1}}\f{\rd s}{(t_{n+1}-s)^\al}+R_{\De t}^{n+1}\\
& =\f1{\Ga(2-\al)}\sum_{j=0}^n b_1^j\f{h(t_{n-j+1})-h(t_{n-j})}{\De t^\al}+R_{\De t}^{n+1},\quad n=0,\dots,N-1,
\end{align*}
where
\[
b_1^j:=(j+1)^{1-\al}-j^{1-\al},\quad j=0,1,\dots,n
\]
and $R_{\De t}^{n+1}$ stands for the truncation error. It was shown in \cite{LX07} that if $h\in C^2[0,T]$, then $R_{\De t}^{n+1}=O(\De t^{2-\al})$ uniformly for $n=1,2,\dots$.

On the spatial direction, we similarly discretize the interval $[0,L]$ with a step size $\De x=L/I$ ($I=2,3,\dots$) and write $x_i:=i\De x$ ($i=0,1,\dots,I$). For the Laplacian, we simply employ the central difference discretization. For $n=0,1,\dots,N$ and $i=0,1,\dots,I$, by $u^n_i$ and $v^n_i$ we denote the approximations of the true solutions $u$ and $v$ to \eqref{eq-gov2} at $(x,t)=(x_i,t_n)$, and abbreviate
\[
c_{k\ell}^{i,n}:=c_{k\ell}(x_i,t_n),\quad F_k^{i,n}:=F_k(x_i,t_n),\quad k,\ell=1,2.
\]
For the numerical computation, all functions $c_{k\ell},F_k$ are assumed to allow pointwise definitions at grid points.

Now we are well prepared to discretize the problem \eqref{eq-gov2}. Assume that we have finished the simulation of $u,v$ until $t=t_n$ and are in a position to proceed to the next level $t=t_{n+1}$ for some $n=0,\dots,N-1$. Regarding the lower order parts $-c_{k1}u-c_{k2}v+F_k$ ($k=1,2$) as given data at the previous level $t=t_n$, we propose a semi-implicit scheme for \eqref{eq-gov2} as
\begin{equation}\label{num-cal-discrete}
\begin{aligned}
\f1{\Ga(2-\al)}\sum_{j=0}^n b_1^j\f{u^{n-j+1}_i-u^{n-j}_i}{\De t^\al}-d_1\f{u^{n+1}_{i-1}-2u^{n+1}_i+u^{n+1}_{i+1}}{\De x^2} & =-c_{11}^{i,n}u^n_i-c_{12}^{i,n}v^n_i+F_1^{i,n},\\
\f1{\Ga(2-\be)}\sum_{j=0}^n b_2^j\f{v^{n-j+1}_i-v^{n-j}_i}{\De t^\be}-d_2\f{v^{n+1}_{i-1}-2v^{n+1}_i+v^{n+1}_{i+1}}{\De x^2} & =-c_{21}^{i,n}u^n_i-c_{22}^{i,n}v^n_i+F_2^{i,n},
\end{aligned}
\end{equation}
where
\[
b_2^j:=(j+1)^{1-\be}-j^{1-\be},\quad j=0,1,\dots,n.
\]
Further, introducing
\[
r_1:=\f{d_1\Ga(2-\al)\De t^\al}{\De x^2},\quad r_2:=\f{d_2\Ga(2-\be)\De t^\be}{\De x^2}, 
\]
we can rewrite \eqref{num-cal-discrete} by separating unknowns from computed data on both sides as
\begin{equation}\label{semi-imp}
\begin{aligned}
-r_1u^{n+1}_{i-1}+(1+2r_1)u^{n+1}_i-r_1u^{n+1}_{i+1} & =b_1^n u^0_i+\sum_{j=1}^n\left(b_1^{n-j}-b_1^{n-j+1}\right)u^j_i\\
& \quad\,+\f{\De x^2r_1}{d_1}\left(-c_{11}^{i,n}u^n_i-c_{12}^{i,n}v^n_i+F_1^{i,n}\right),\\
-r_2v^{n+1}_{i-1}+(1+2r_2)v^{n+1}_i-r_2v^{n+1}_{i+1} & =b_2^n v^0_i+\sum_{j=1}^n\left(b_2^{n-j}-b_2^{n-j+1}\right)v^j_i\\
& \quad\,+\f{\De x^2r_2}{d_2}\left(-c_{21}^{i,n}u^n_i-c_{22}^{i,n}v^n_i+F_2^{i,n}\right)
\end{aligned}
\end{equation}
for $n=0,\dots,N-1$ and $i=0,\dots,I$. Then the two equations in \eqref{semi-imp} are decoupled from each other and thus can be solved independently. Since the homogeneous Dirichlet boundary condition gives $u_0^n=u_I^n=v_0^n=v_I^n=0$ for all $n$, one can reformulate \eqref{semi-imp} as two linear systems with tridiagonal positive-definite matrices. Then the stability analysis for single subdiffusion equations works also in our case, and automatically the semi-implicit scheme \eqref{semi-imp} is absolutely stable.

Following the same line, we regard the lower order parts $-c_{k1}u-c_{k2}v+F_k$ ($k=1,2$) as unknowns at the next level $t=t_{n+1}$ to propose a fully implicit scheme for \eqref{eq-gov2} as
\begin{align*}
\f1{\Ga(2-\al)}\sum_{j=0}^n b_1^j\f{u^{n-j+1}_i-u^{n-j}_i}{\De t^\al}-d_1\f{u^{n+1}_{i-1}-2u^{n+1}_i+u^{n+1}_{i+1}}{\De x^2}+c_{11}^{i,n+1}u^{n+1}_i+c_{12}^{i,n+1}v^{n+1}_i & =F_1^{i,n+1},\\
\f1{\Ga(2-\be)}\sum_{j=0}^n b_2^j\f{v^{n-j+1}_i-v^{n-j}_i}{\De t^\be}-d_2\f{v^{n+1}_{i-1}-2v^{n+1}_i+v^{n+1}_{i+1}}{\De x^2}+c_{21}^{i,n+1}u^{n+1}_i+c_{22}^{i,n+1}v^{n+1}_i & =F_2^{i,n+1}.
\end{align*}
Again separating unknowns from computed data on both sides yields
\begin{equation}\label{full-imp0}
\begin{aligned}
& \quad\,-r_1u^{n+1}_{i-1}+\left(1+2r_1+\f{\De x^2r_1}{d_1}c_{11}^{i,n+1}\right)u^{n+1}_i-r_1u^{n+1}_{i+1}+\f{\De x^2r_1}{d_1}c_{12}^{i,n+1}v^{n+1}_i\\
& =b_1^n u^0_i+\sum_{j=1}^n(b_1^{n-j}-b_1^{n-j+1})u^j_i+\f{\De x^2r_1}{d_1}F_1^{i,n+1},\\
& \quad\,-r_2v^{n+1}_{i-1}+\left(1+2r_2+\f{\De x^2r_2}{d_2}c_{22}^{i,n+1}\right)v^{n+1}_i-r_2v^{n+1}_{i+1}+\f{\De x^2r_2}{d_2}c_{21}^{i,n+1}u^{n+1}_i\\
& =b_2^n v^0_i+\sum_{j=1}^n(b_2^{n-j}-b_2^{n-j+1})v^j_i+\f{\De x^2r_2}{d_2}F_2^{i,n+1}.
\end{aligned}
\end{equation}
We denote
\begin{align*}
\bm u^n & :=\left(u^n_1,\dots,u^n_{I-1},v^n_1,\dots,v^n_{I-1}\right)^\T\in\BR^{2(I-1)},\\
\bm F^n & :=\left(F_1^{1,n},\dots,F_1^{I-1,n},F_2^{1,n},\dots,F_2^{I-1,n}\right)^\T\in\BR^{2(I-1)}
\end{align*}
and abbreviate the coefficients appearing above as
\[
\begin{aligned}
\al_i & :=1+2r_1+\f{\De x^2r_1}{d_1}c_{11}^{i,n+1}, & \quad\ga^1_i & :=\f{\De x^2r_1}{d_1}c_{12}^{i,n+1},\\
\be_i & :=1+2r_2-\f{\De x^2r_2}{d_2}c_{22}^{i,n+1}, & \quad\ga^2_i & :=\f{\De x^2r_2}{d_2}c_{21}^{i,n+1}
\end{aligned}\quad(i=1,\dots,I-1).
\]
Then \eqref{full-imp0} can be reformulated as a linear system
\begin{equation}\label{full-imp1}
\rA\bm u^{n+1}=\rB^n_0\bm u^0+\sum_{j=1}^n\rB^{n-j}\bm u^j+\rC\bm F^{n+1},
\end{equation}
where the coefficient matrix $\rA$ is a block matrix
\[
\rA:=\begin{pmatrix} 
\rA_{11} & \rA_{12}\\
\rA_{21} & \rA_{22}  
\end{pmatrix}\in\BR^{2(I-1)\times2(I-1)}
\]
with
\begin{align*}
\rA_{11} & :=\begin{pmatrix} 
\al_1 & -r_1 &&&&&& \text{\huge0}\\
-r_1 & \al_2 && -r_1\\  
& \ddots && \ddots && \ddots\\ 
&&& -r_1  &&\al_{I-2} && -r_1\\  
\text{\huge0} &&&&& -r_1  &&\al_{I-1}\\
\end{pmatrix}\in\BR^{(I-1)\times(I-1)},\\
\rA_{22} & :=\begin{pmatrix} 
\be_1 & -r_2 &&&&&& \text{\huge0}\\
-r_2 & \be_2 && -r_2\\  
& \ddots && \ddots && \ddots\\ 
&&& -r_2  &&\be_{I-2} && -r_2\\  
\text{\huge0} &&&&& -r_2  &&\be_{I-1} 
\end{pmatrix}\in\BR^{(I-1)\times(I-1)},\\
\rA_{12} & :=-\diag(\ga^1_1,\dots,\ga^1_{I-1})\in\BR^{(I-1)\times(I-1)},\\
\rA_{21} & :=-\diag(\ga^2_1,\dots,\ga^2_{I-1})\in\BR^{(I-1)\times(I-1)}.
\end{align*}
The matrices on the right-hand side of \eqref{full-imp1} are
\begin{align*}
\rB^n_0 & :=\diag(b_1^n,\dots,b_1^n,b_2^n,\dots,b_2^n)\in\BR^{2(I-1)\times2(I-1)},\\
\rB^j & :=\diag(b_1^j-b_1^{j+1},\dots,b_1^j-b_1^{j+1},b_2^j-b_2^{j+1},\dots,b_2^j-b_2^{j+1})\in\BR^{2(I-1)\times2(I-1)},\quad j=0,\dots,n-1,\\
\rC & :=\diag\left(\f{\De x^2r_1}{d_1},\dots,\f{\De x^2r_1}{d_1},\f{\De x^2r_2}{d_2},\dots,\f{\De x^2r_2}{d_2}\right)\in\BR^{2(I-1)\times2(I-1)}.
\end{align*}
This completes the construction of the fully implicit scheme for \eqref{eq-gov2}.

Next, we analyze the numerical stability of the above proposed scheme \eqref{full-imp1}. To this end, it suffices to verify that the absolute values of all eigenvalues of the coefficient matrix $\rA$ in \eqref{full-imp1} are greater than $1$. Denoting the set of all eigenvalues of $\rA$ by $\si(\rA)$, we take advantage of the Gershgorin circle theorem to deduce
\[
\si(\rA)\subset\bigcup_{i=1}^{2(I-1)}\rR_i,\
\]
where
\[
\rR_i:=\begin{cases}
\left\{z\in\BC\mid|z-\al_i|\le r_1+|\ga^1_i|\right\}, & i=1,I-1,\\
\left\{z\in\BC\mid|z-\al_i|\le2r_1+|\ga^1_i|\right\}, & i=2,\dots,I-2,\\
\left\{z\in\BC\mid|z-\be_i|\le r_2+|\ga^2_i|\right\}, & i=I,2I-2,\\
\left\{z\in\BC\mid|z-\be_i|\le2r_2+|\ga^2_i|\right\}, & i=I+1,\dots,2I-3,
\end{cases}
\]
We represent disks $\rR_i,\rR_{i+I-1}$ ($i=2,\dots,I-2$) in detail as
\begin{align*}
\rR_i:|z-\al_i| & =\left|z-\left(1+2r_1+\f{\De x^2r_1}{d_1}c_{11}^{i,n+1}\right)\right|\le2r_1+|\ga^1_i|=2r_1+\f{\De x^2r_1}{d_1}|c_{12}^{i,n+1}|,\\
\rR_{i+I-1}:|z-\be_i| & =\left|z-\left(1+2r_2+\f{\De x^2r_2}{d_2}c_{22}^{i,n+1}\right)\right|\le2r_2+|\ga^2_i|=2r_2+\f{\De x^2r_2}{d_2}|c_{21}^{i,n+1}|.
\end{align*}
Similarly, for $\rR_1,\rR_{I-1},\rR_I,\rR_{2(I-1)}$, it can be shown that
\begin{align*}
\rR_1:|z-\al_1| & =\left|z-\left(1+2r_1+\f{\De x^2r_1}{d_1}c_{11}^{1,n+1}\right)\right|\le r_1+|\ga^1_1|=r_1+\f{\De x^2r_1}{d_1}|c_{12}^{1,n+1}|,\\
\rR_{I-1}:|z-\al_{I-1}| & =\left|z-\left(1+2r_1+\f{\De x^2r_1}{d_1}c_{11}^{I-1,n+1}\right)\right|\le r_1+|\ga^1_{I-1}|=r_1+\f{\De x^2r_1}{d_1}|c_{12}^{I-1,n+1}|,\\
\rR_I:|z-\be_1| & =\left|z-\left(1+r_2+\f{\De x^2r_2}{d_2}c_{22}^{1,n+1}\right)\right|\le r_2+\ga^2_1|=r_2+\f{\De x^2r_2}{d_2}|c_{21}^{1,n+1}|,\\
\rR_{2(I-1)}:|z-\be_{I-1}| & =\left|z-\left(1+2r_2+\f{\De x^2r_2}{d_2}c_{22}^{I-1,n+1}\right)\right|\le r_2+|\ga^2_{I-1}|=r_2+\f{\De x^2r_2}{d_2}|c_{21}^{I-1,n+1}|.
\end{align*}
Recalling the global assumption \eqref{eq-assume2} on the coupling coefficients $c_{k\ell}$, we see that if
\begin{equation}\label{stab-cond}
c_{11}\ge|c_{12}|,\quad c_{22}\ge|c_{21}|\quad\mbox{in }\Om\times(0,T),
\end{equation}
then
\[
2r_1+|\ga^1_i|-|\al_i|\ge1,\quad2r_2+|\ga^2_i|-|\be_i|\ge1,\quad i=1,\dots,I-1
\]
and hence the domain $\rS_\rR$ lies completely outside the open unit ball $\{|z|<1\}$, indicating that the absolute values of all eigenvalues of the coefficient matrix $\rA$ are no less than $1$. Therefore, we conclude that \eqref{stab-cond} is a sufficient condition for the absolute stability of the fully implicit scheme \eqref{full-imp1}. The condition \eqref{stab-cond} is in principle independent of the key assumption \eqref{eq-assume0} in Theorem \ref{thm-asymp}, but is highly related to the semi-definiteness condition in \cite[Theorem 2]{LHL23}.

We close this section by giving several remarks. First, we notice that the fully implicit scheme \eqref{full-imp1} can only be easily implemented for linear systems. Even for the simplest nonlinear systems such as the following semilinear subdiffusion-reaction system
\[
\begin{cases}
\begin{aligned}
& \pa_t^\al(u-u_0)-d_1u_{xx}=G_1(u,v),\\
& \pa_t^\be(v-v_0)-d_2v_{xx}=G_2(u,v)
\end{aligned}
& \mbox{in }(0,L)\times(0,T),\\
u=v=0 & \mbox{on }\{0,L\}\times(0,T),
\end{cases}
\]
one should take advantage of some Newton-type methods to solve a coupled nonlinear system at each step to implement a fully implicit scheme. In this sense, nonlinear generalizations of the semi-implicit scheme \eqref{semi-imp} turn out to be rather convenient in applications. On the other hand, it reveals in the numerical tests in the next section that both semi-implicit and fully implicit schemes demonstrate high numerical accuracy and there seems no mentionable difference between their numerical performance even in observing the long-time asymptotic behavior of solutions.

Finally, we mention that all above discussions automatically work for more components than $2$, e.g., in the next section we will also deal with coupled systems with $3$ components. Likewise, the coupling of subdiffusion and usual diffusion equations can also be discretized in the same manner, and the absolute stability still holds as long as we keep the scheme implicit. The arguments are almost identical and the details here are omitted.

\section{Numerical verification of decay rates}\label{sec-test}

Based on the numerical schemes proposed in the previous section, this section is devoted to the numerical justification of the large time asymptotic behavior of solutions to the initial-boundary value problem \eqref{eq-gov2} as well as its 3-component counterpart, which hopefully provide motivative hints to the theoretical studies of the sharp decay rates of solutions.

For \eqref{eq-gov2}, first we fix the coefficients in the governing equations as
\[
L=\pi,\quad\quad d_1=d_2=1,\quad c_{11}=c_{22}=1,\quad c_{12}=c_{21}=-1
\]
and the source terms as $F_1=F_2=0$. As for the initial values, we are concerned with whether $v_0$ vanishes identically or not. In numerical examples, the following cases are considered.
\begin{equation}\label{eq-IC2}
\left\{\!\begin{aligned}
\mbox{(i)}\ & u_0(x)=\sin x,\ v_0(x)=\f\pi2-\left|x-\f\pi2\right|,\\
\mbox{(ii)}\ & u_0(x)=\sin x,\ v_0(x)=0.
\end{aligned}\right.
\end{equation}
We explain the method for verifying the numerical decay rates of solutions. If the $L^2(\Om)$-norms of $u(x,t)$ and $v(x,t)$ decay at a rate of a constant power of $t$, then the functions
\begin{equation}\label{eq-numer-decay}
\f{\log\|u(t)\|}{\log t},\quad\f{\log\|v(t)\|}{\log t}
\end{equation}
should converge to that constant as $t\to+\infty$. Therefore, after computing the numerical solutions of $u,v$, we approximate $\|u(t)\|$ and $\|v(t)\|$ by some numerical integration and then evaluate the above quantities \eqref{eq-numer-decay} until a sufficiently large final time $T$. In the following examples, we basically choose $T=1000$, which turns out to be sufficient to conclude the numerical convergence of the decay rates.

\begin{example}\label{ex-2frac}
First we choose the orders of time derivatives in \eqref{eq-gov2} as
\[
\al=0.9,\quad\be=0.5.
\]
For both cases in \eqref{eq-IC2}, we perform numerical simulations by the schemes proposed in the previous section and compute the quantities \eqref{eq-numer-decay} to verify the decay rates of both components of the solution. The numerical results are illustrated in Figure \ref{asym-be_TFDS_K=2}, which shows clear convergence to expected constants highlighted by dashed red lines. In detail, we can conclude
\[
\left\{\begin{aligned}
\mbox{(i)}\ & \mbox{If $u_0\not\equiv0$ and $v_0\not\equiv0$, then }\|u(t)\|,\|v(t)\|\sim t^{-0.5},\\
\mbox{(ii)}\ & \mbox{If $u_0\not\equiv0$ and $v_0\equiv0$, then }\|u(t)\|,\|v(t)\|\sim t^{-0.9}
\end{aligned}\right.\quad\mbox{as }t\to+\infty.
\]
The result in Case (i) confirms the sharp decay rate $t^{-\be}$ obtained in Lemma \ref{lem-wp}(ii), while that in Case (ii) obviously suggests the decay rate $t^{-\al}$ in the case of $\al<1$, which was later established in Theorem \ref{thm-asymp}. Intuitively, due to the lack of the initial supply, the time evolution of the component $v$ corresponding to the smaller order $\be$ relies completely on the supply from the other component $u$. As a result, the long-time asymptotic behavior of $v$ reflects the decay rate $t^{-\al}$ of $u$.
\begin{figure}[htbp]
\includegraphics[width=0.48\textwidth]{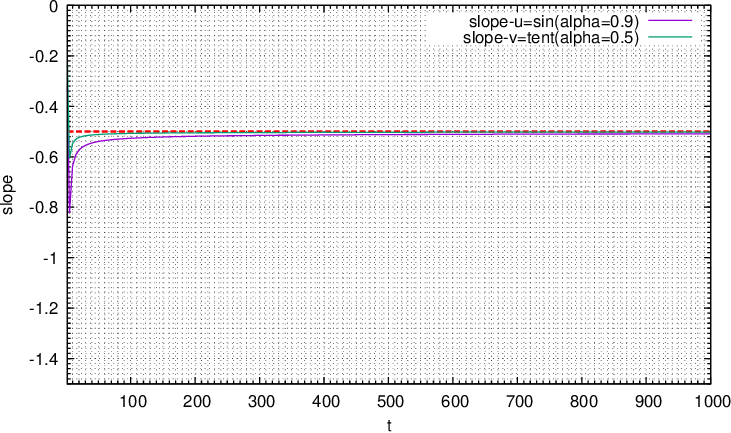}\quad
\includegraphics[width=0.48\textwidth]{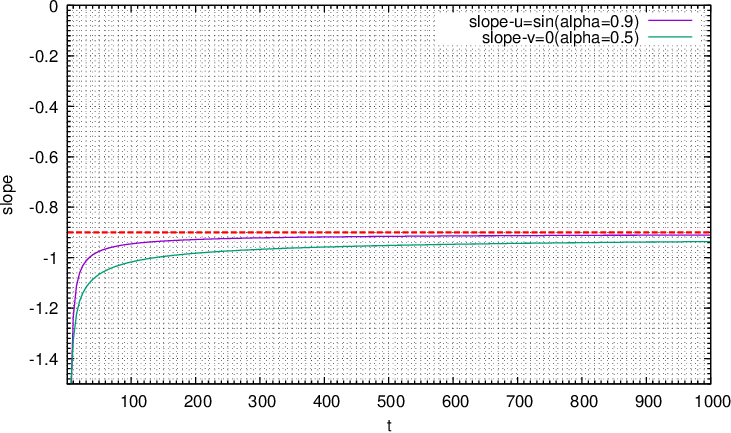}
\caption{Long-time decay rates of solutions to the coupled system \eqref{eq-gov2} with $\al=0.9$ and $\be=0.5$. Left: Case (i) of the initial values. Right: Case (ii) of the initial values.}\label{asym-be_TFDS_K=2}
\end{figure}
\end{example}

\begin{example}
Next, we study the more interesting case of $\al=1$ in \eqref{eq-gov2} and change several different values of $\be<1$. We first fix $\be=0.5$ and repeat the same test in Example \ref{ex-2frac}. From the numerical results plotted in Figure \ref{asym-be_TPFDS}, it can be clearly observed that
\[
\left\{\begin{aligned}
\mbox{(i)}\ & \mbox{If $u_0\not\equiv0$ and $v_0\not\equiv0$, then }\|u(t)\|,\|v(t)\|\sim t^{-0.5},\\
\mbox{(ii)}\ & \mbox{If $u_0\not\equiv0$ and $v_0\equiv0$, then }\|u(t)\|,\|v(t)\|\sim t^{-1.5}
\end{aligned}\right.\quad\mbox{as }t\to+\infty.
\]
As expected, the result in Case (i) complies with the same sharp decay rate $t^{-\be}$ as before, indicating that Lemma \ref{lem-wp}(ii) still holds true for $\al=1$. However, the superlinear decay observed in Case (ii) is rather unexpected and surprising, which seems never happen in the coupling of two usual diffusion equations or two subdiffusion equations.
\begin{figure}[htbp]
\includegraphics[width=0.48\textwidth]{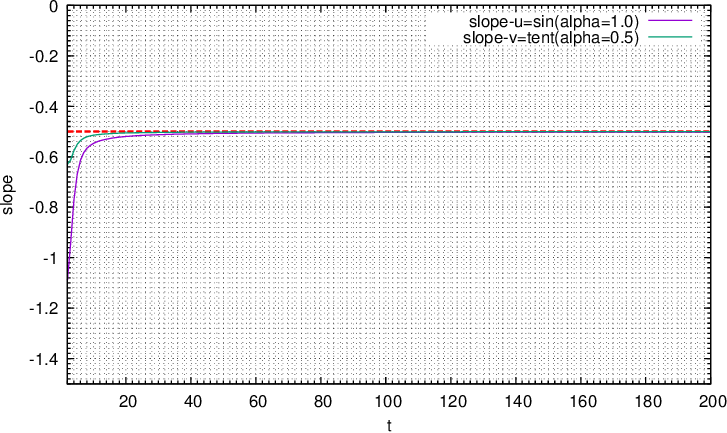}\quad
\includegraphics[width=0.48\textwidth]{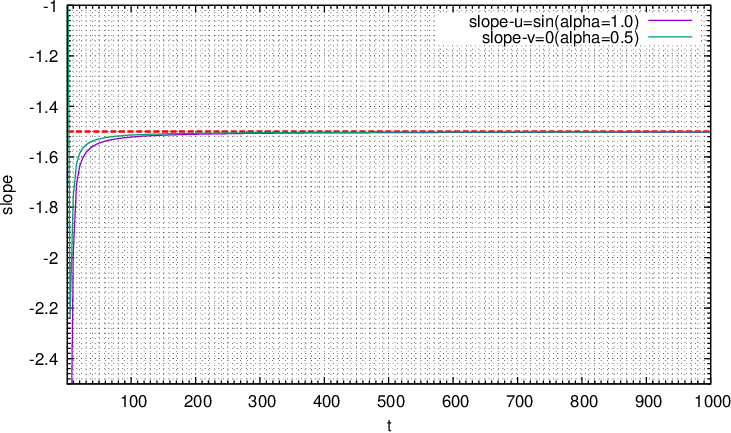}
\caption{Long-time decay rates of solutions to the coupled system \eqref{eq-gov2} with $\al=1.0$ and $\be=0.5$. Left: Case (i) of the initial values. Right: Case (ii) of the initial values.}\label{asym-be_TPFDS}
\end{figure}
 
In order to deepen the understanding of this decay pattern, we fix the choice of initial values as Case (ii) in \eqref{eq-IC2} (i.e., $u_0\not\equiv0$ and $v_0\equiv0$) and change the value of $\be$ to identify the dependency of the decay rate on $\be$. We choose $\be=0.3$, $\be=0.7$ and repeat the same procedure as before. As is shown in Figure \ref{asym-be_TPFDS_K=2}, we can observe
\[
\left\{\begin{aligned}
& \mbox{If $\al=1.0$ and $\be=0.3$, then }\|u(t)\|,\|v(t)\|\sim t^{-1.3},\\
& \mbox{If $\al=1.0$ and $\be=0.7$, then }\|u(t)\|,\|v(t)\|\sim t^{-1.7}
\end{aligned}\right.\quad\mbox{as }t\to+\infty.
\]
Evidently, it is reasonable to conjecture that the decay rate of both components of the solution is $t^{-(1+\be)}$ in the case of $\al=1$ and $v_0\equiv0$. The condition for realizing such a unique pattern turns out to be somehow restrictive, namely, the coupling should be a mixture of fractional and non-fractional equations and the initial value of the latter should vanish. In such a sense, this superlinear decay reflects a subtle balance between exponential and sublinear decays, which is, fortunately, theoretically demonstrated in Theorem \ref{thm-asymp}.
\begin{figure}[htbp]
\includegraphics[width=0.48\textwidth]{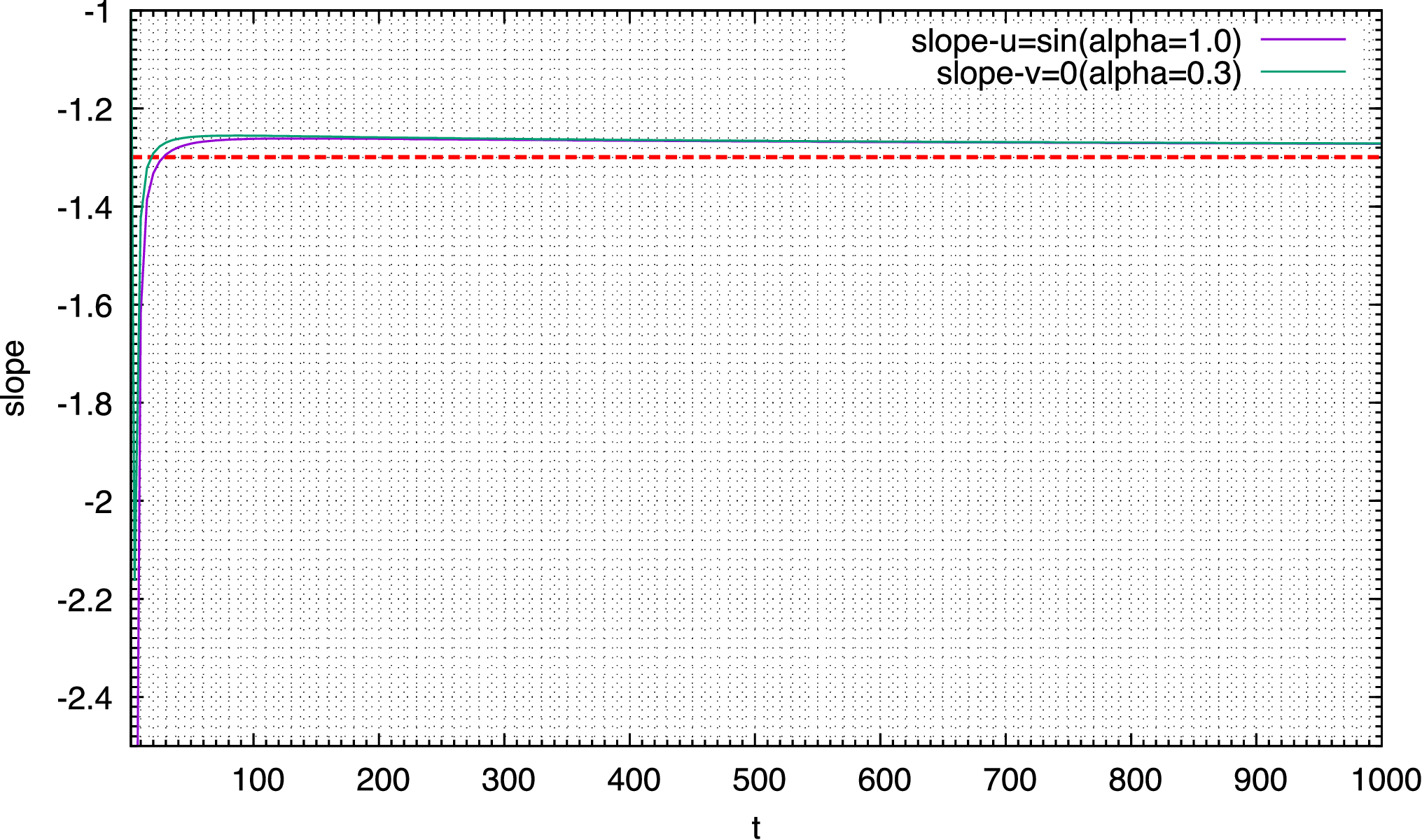}\quad
\includegraphics[width=0.48\textwidth]{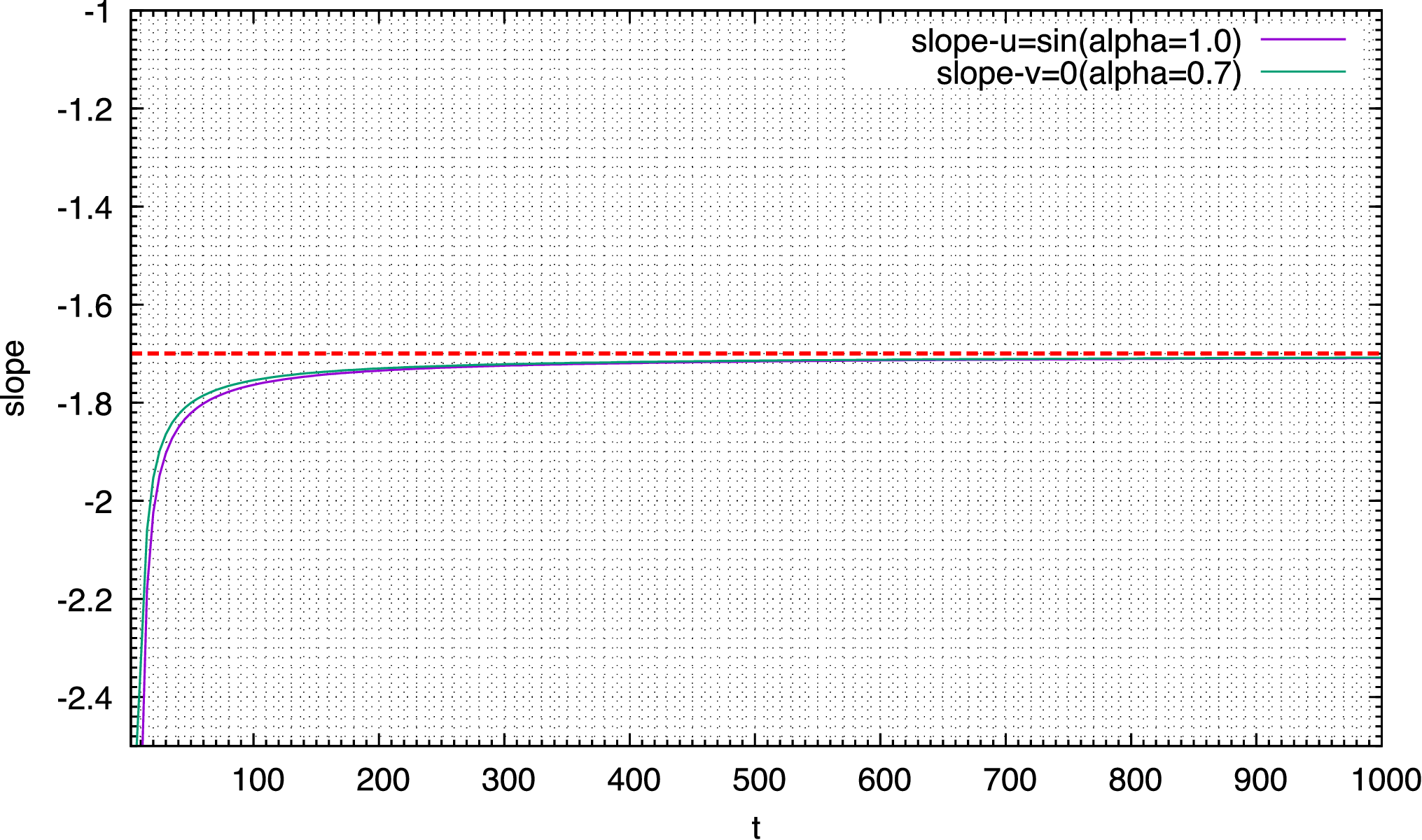}
\caption{Long-time asymptotic behavior of the solution to the coupled system \eqref{eq-gov2} with $\al=1.0$ and $v_0\equiv0$. Left: $\be=0.3$. Right: $\be=0.7$.}\label{asym-be_TPFDS_K=2}
\end{figure}
\end{example}

On the same direction, it is of natural curiosity to further study the long-time asymptotic behavior of coupling systems of 3 components at least from the numerical aspect. Inheriting the same formulation as \eqref{eq-gov2} of 2 components, here we deal with the model system
\begin{equation}\label{eq-gov3}
\begin{cases}
\begin{aligned}
& \pa_t^\al(u-u_0)-u_{xx}+u-0.5v-0.5w=0,\\
& \pa_t^\be(v-v_0)-v_{xx}-0.5u+v-0.5w=0,\\
& \pa_t^\ga(w-w_0)-w_{xx}-0.5u-0.5v+w=0
\end{aligned}
& \mbox{in }(0,\pi)\times(0,T),\\
u=v=w=0 & \mbox{on }\{0,\pi\}\times(0,T),
\end{cases}
\end{equation}
where $1\ge\al\ge\be\ge\ga>0$. The coupling coefficients $c_{k\ell}$ are safely chosen to achieve both the numerical stability and the possible assumption for the expected decay rate. For 3 components, the combinations of initial values are more flexible than before and here we consider the following 3 cases:
\begin{equation}\label{eq-IC3}
\left\{\!\begin{aligned}
\mbox{(i)}\ & u_0(x)=x(\pi-x),\ v_0(x)=\sin x,\ w_0(x)=\f\pi2-\left|x-\f\pi2\right|,\\
\mbox{(ii)}\ & u_0(x)=\sin x,\ v_0(x)=\f\pi2-\left|x-\f\pi2\right|,\ w_0(x)=0,\\
\mbox{(iii)}\ & u_0(x)=\sin x,\ v_0(x)=w_0(x)=0.
\end{aligned}\right.
\end{equation}
We perform similar simulations as before until the final time $T=1000$ and record the same quantities as \eqref{eq-numer-decay} to observe possible decay rates.

\begin{example}
Parallel to Example \ref{ex-2frac}, we start with the coupled system of 3 subdiffusion equations and choose the orders of time derivatives in \eqref{eq-gov3} as
\[
\al=0.9,\quad\be=0.5,\quad\ga=0.3.
\]
We test all 3 cases in \eqref{eq-IC3} for initial values and plot the time evolution of the decay for all 3 components of the solution in Figure \ref{asym-be_TFDS_K=3}. As before, we clearly observe that
\[
\left\{\begin{aligned}
\mbox{(i)}\ & \mbox{If $u_0\not\equiv0$, $v_0\not\equiv0$ and $w_0\not\equiv0$, then }\|u(t)\|,\|v(t)\|,\|w(t)\|\sim t^{-0.3},\\
\mbox{(ii)}\ & \mbox{If $u_0\not\equiv0$, $v_0\not\equiv0$ and $w_0\equiv0$, then }\|u(t)\|,\|v(t)\|,\|w(t)\|\sim t^{-0.5},\\
\mbox{(iii)}\ & \mbox{If $u_0\not\equiv0$, $v_0\equiv0$ and $w_0\equiv0$, then }\|u(t)\|,\|v(t)\|,\|w(t)\|\sim t^{-0.9}
\end{aligned}\right.\quad\mbox{as }t\to+\infty.
\]
Again, the result in Case (i) realizes the sharp decay rate $t^{-\ga}$ in Lemma \ref{lem-wp}(ii), i.e., the decay rate of all components depends on the smallest order of time derivatives as long as the initial value of the last component does not vanish identically. Meanwhile, the results in Cases (ii)--(iii) generalizes our observation in Example \ref{ex-2frac} and Theorem \ref{thm-asymp} in the sense that the decay is accelerated if the initial values of some components with smaller fractional orders vanish. More precisely, we can conjecture from the above numerical tests that the decay rate depends on the lowest fractional order whose initial value does not vanish identically. This corresponds with our intuitive explanation by the absence of initial supply, so that the lowest order with initial supply dominates the long-time behavior of the solution.
\begin{figure}[htbp]
\includegraphics[width=0.48\textwidth]{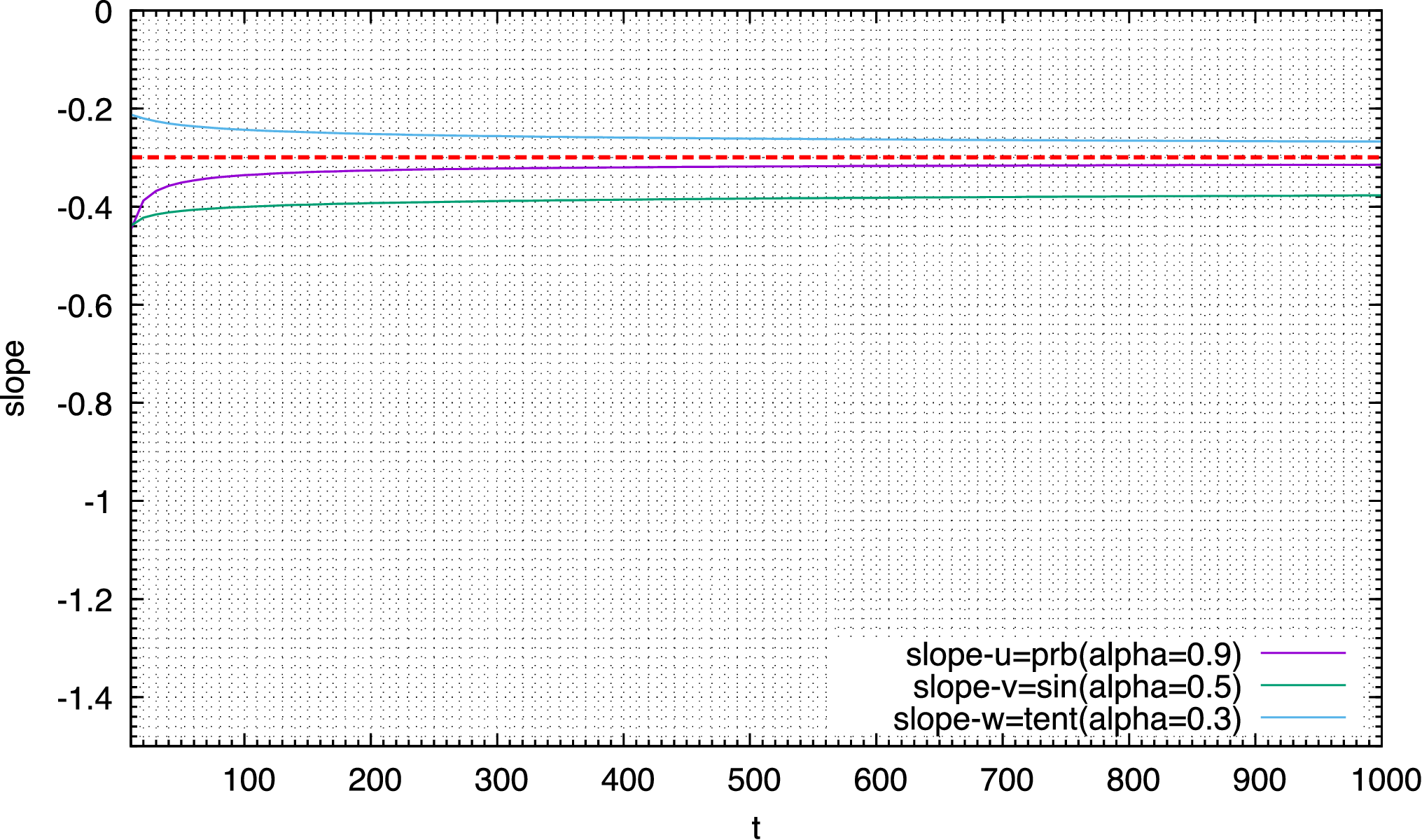}\quad
\includegraphics[width=0.48\textwidth]{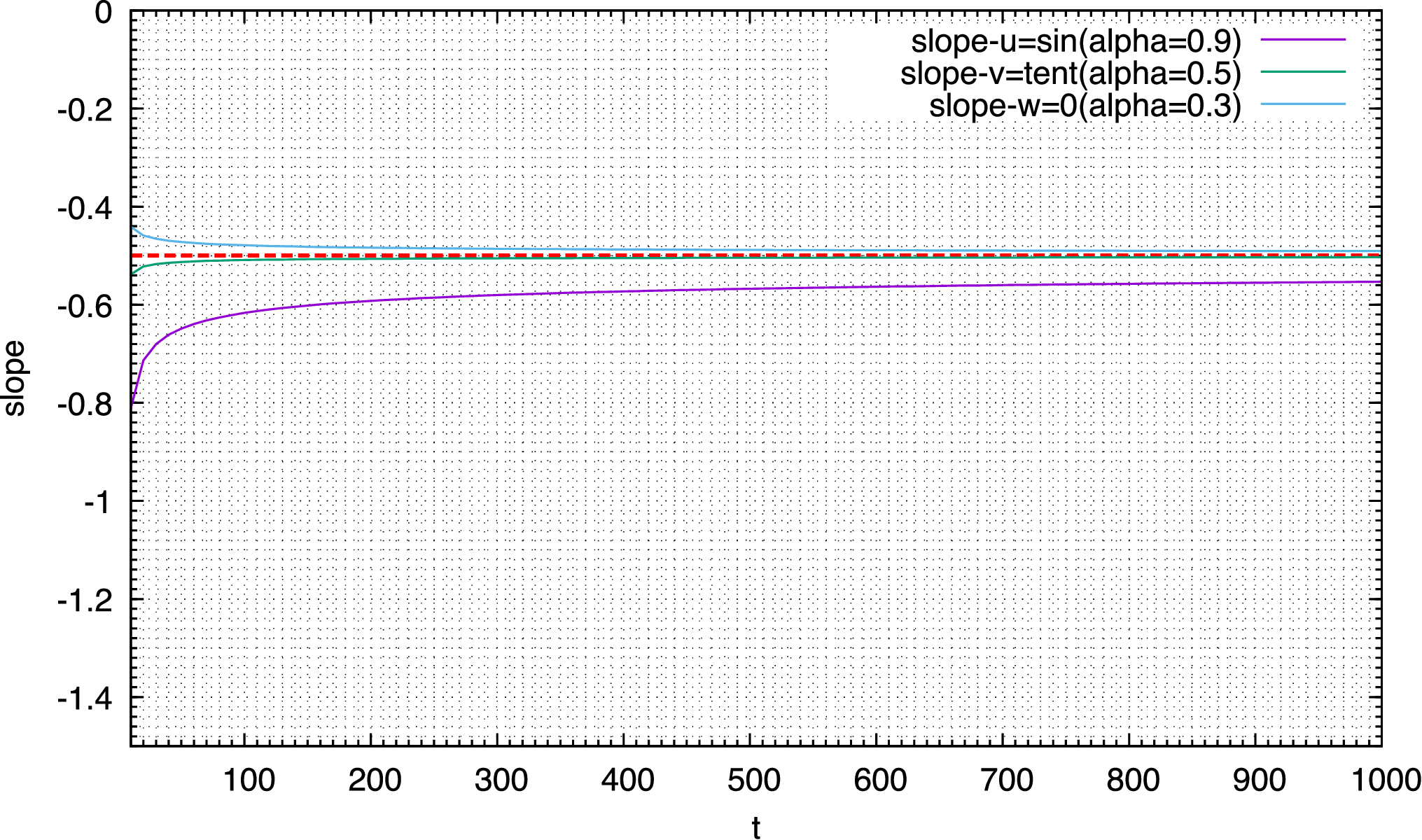}\\[3mm]
\includegraphics[width=0.48\textwidth]{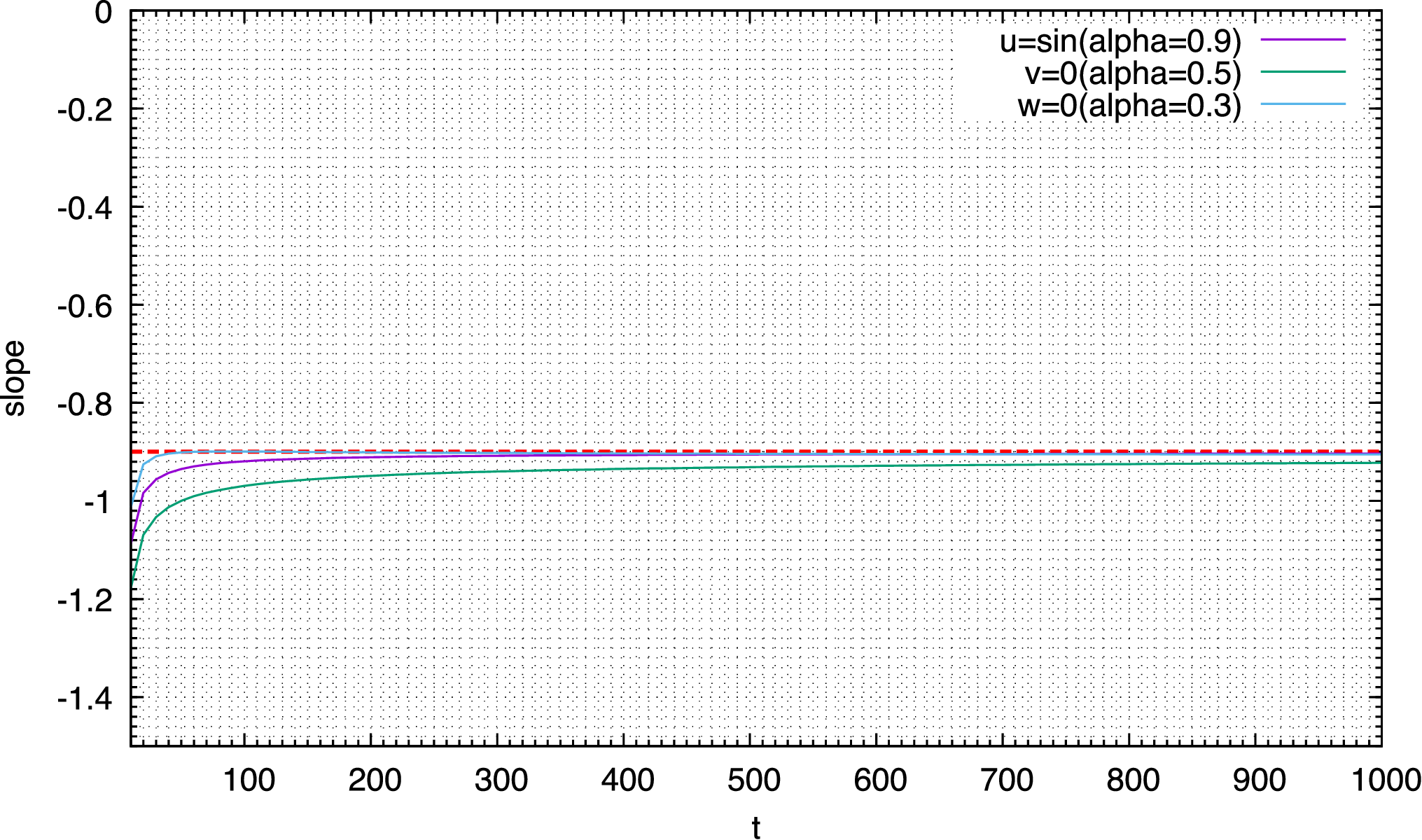}
\caption{Long-time asymptotic behavior of the solution to the coupled system \eqref{eq-gov3} with $\al=0.9$, $\be=0.5$ and $\ga=0.3$. Upper left: Case (i) of the initial values. Upper right: Case (ii) of the initial values. Bottom: Case (iii) of the initial values.}\label{asym-be_TFDS_K=3}
\end{figure}
\end{example}

Next, we investigate the decay pattern of the system with at least one usual diffusion equation and at least one subdiffusion equation, i.e., $\al=1$ and $\ga<1$ in \eqref{eq-gov3}. Motivated by the previous examples, it seems undoubtable that the decay rate of the solution to  \eqref{eq-gov3} should be $t^{-\ga}$ if $w_0\not\equiv0$. Therefore, we skip Case (i) in \eqref{eq-IC3} and mainly discuss Cases (ii)--(iii) in the following two examples.

\begin{example}
Consider Cases (ii) in \eqref{eq-IC3}, namely, $w_0\equiv0$ and $v_0\not\equiv0$. Since now we require $\al=1$ and $\ga<1$, we have some flexibility of choosing $\be$, i.e., $\be<1$ or $\be=1$. We implement both cases and demonstrate the decay of numerical solutions in Figure \ref{asym-be_TPFDS_K=3_w=0}, from which it follows that
\[
\left\{\begin{aligned}
& \mbox{If $\al=1.0$, $\be=0.5$ and $\ga=0.3$, then }\|u(t)\|,\|v(t)\|,\|w(t)\|\sim t^{-0.5},\\
& \mbox{If $\al=1.0$, $\be=1.0$ and $\ga=0.5$, then }\|u(t)\|,\|v(t)\|,\|w(t)\|\sim t^{-1.5}
\end{aligned}\right.\quad\mbox{as }t\to+\infty.
\]
Therefore, we observe similar decay patterns as those of 2 components in the sense that either sublinear or superlinear decay rate occurs depending on the value of $\be$.
\begin{figure}[htbp]
\includegraphics[width=0.48\textwidth]{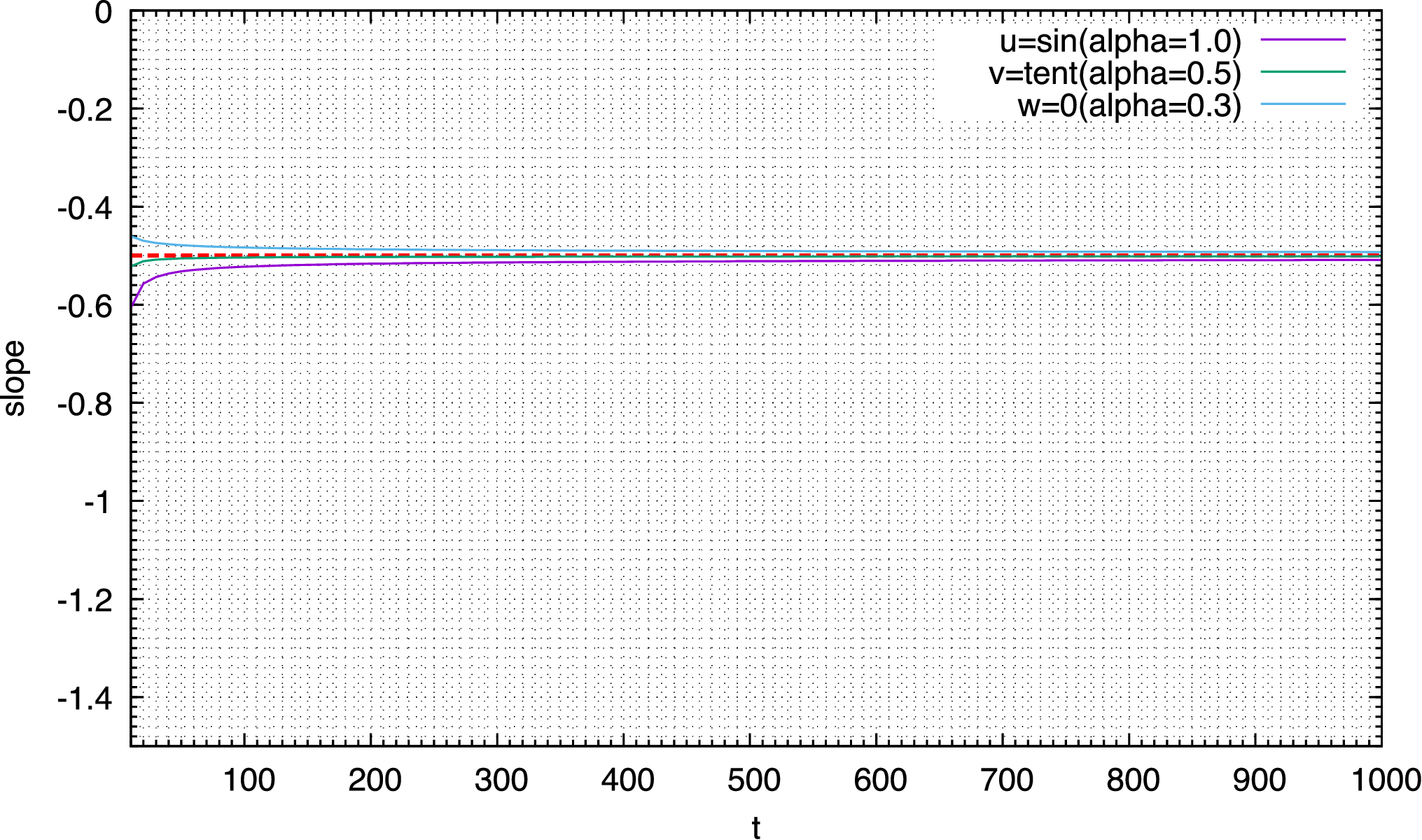}\quad
\includegraphics[width=0.48\textwidth]{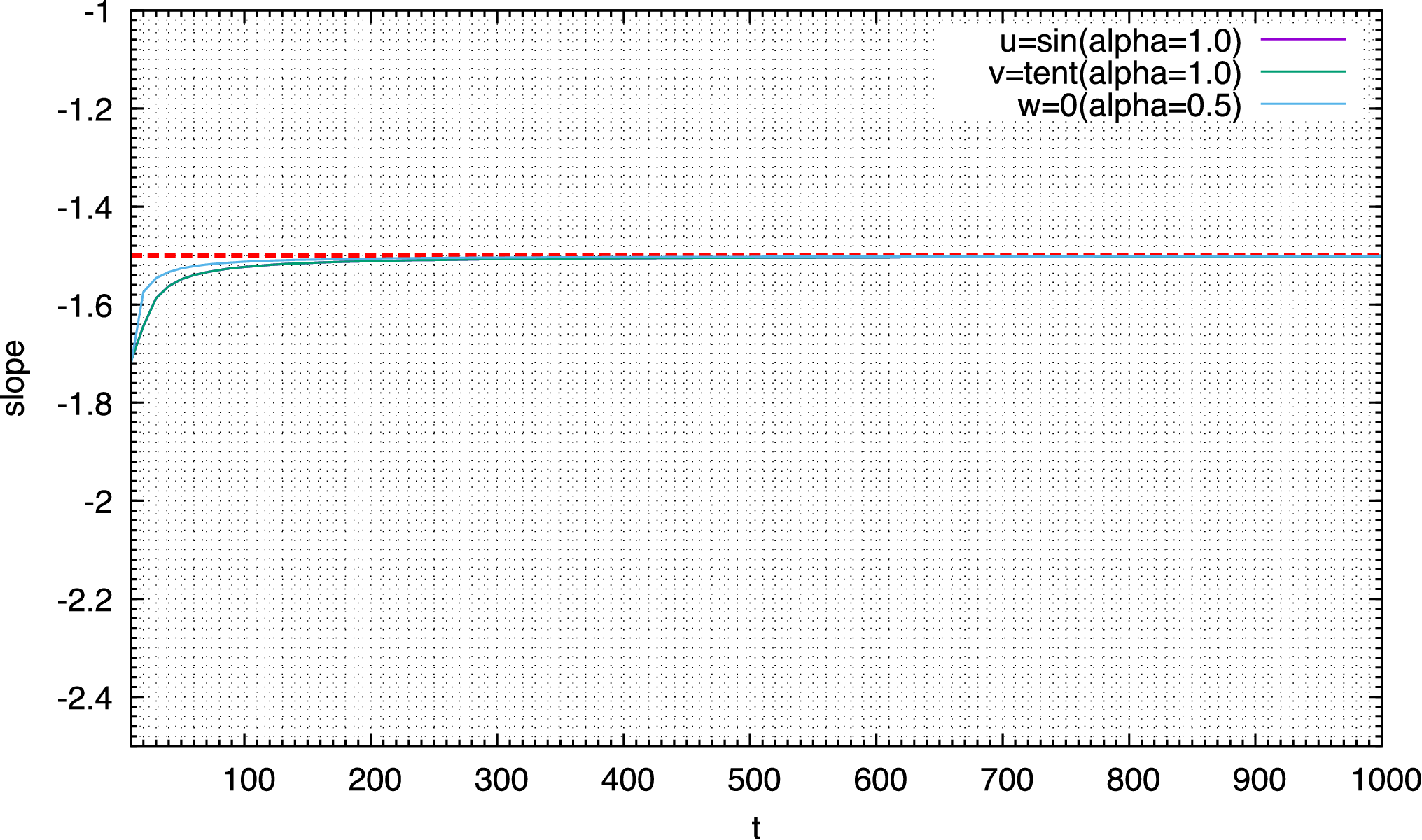}
\caption{Long-time asymptotic behavior of the solution to the coupled system \eqref{eq-gov3} with $w_0\equiv0$. Left: $\al=1.0$, $\be=0.5$ and $\ga=0.3$. Right: $\al=\be=1.0$ and $\ga=0.5$.}\label{asym-be_TPFDS_K=3_w=0}
\end{figure}

To further summarize the underlying rules of decay rates, we test various combinations of orders $\al,\be,\ga$. Since all figures until now show clear convergence to certain constants, we simply list the observed numerical results in Table \ref{asym-be_TPFDS_K=3_w=0_table}. Now it is obvious that the decay patterns switch according to the choice of $\be$, that is, $t^{-\be}$ (sublinear) if $\be<1$ and $t^{-(1+\ga)}$ (superlinear) if $\be=1$. Since $\be$ is the lowest order with a non-vanishing initial value, again we confirm the importance of this order as discussed in the last example.
\begin{table}[htbp]
\caption{Long-time asymptotic behavior of the solution to the coupled system \eqref{eq-gov3} with $w_0\equiv0$ and various choices of fractional orders.}\label{asym-be_TPFDS_K=3_w=0_table}
\begin{tabular}{ccc|c}
\hline\hline
$\al$ & $\be$ & $\ga$ & $\|u(t)\|,\|v(t)\|,\|w(t)\|$\\
\hline
$1.0$ & $0.5$ & $0.3$ & $t^{-0.5}$\\
$1.0$ & $0.5$ & $0.5$ & $t^{-0.5}$\\
$1.0$ & $0.7$ & $0.5$ & $t^{-0.7}$\\
$1.0$ & $1.0$ & $0.3$ & $t^{-1.3}$\\
$1.0$ & $1.0$ & $0.5$ & $t^{-1.5}$\\
$1.0$ & $1.0$ & $0.7$ & $t^{-1.7}$\\
\hline\hline
\end{tabular}
\end{table}
\end{example}

\begin{example}
Finally, Case (iii) in \eqref{eq-IC3} is considered, namely, $v_0=w_0\equiv0$ and $u_0\not\equiv0$. As before, we test both cases of $\be<1$ and $\be=1$ and illustrate the decay of numerical solutions in Figure \ref{asym-be_TPFDS_K=3_v=0-w=0}. Contrary to the last example, here both cases show a decay rate of $t^{-1.5}$ independent of the choice of $\be$.
\begin{figure}[htbp]
\includegraphics[width=0.48\textwidth]{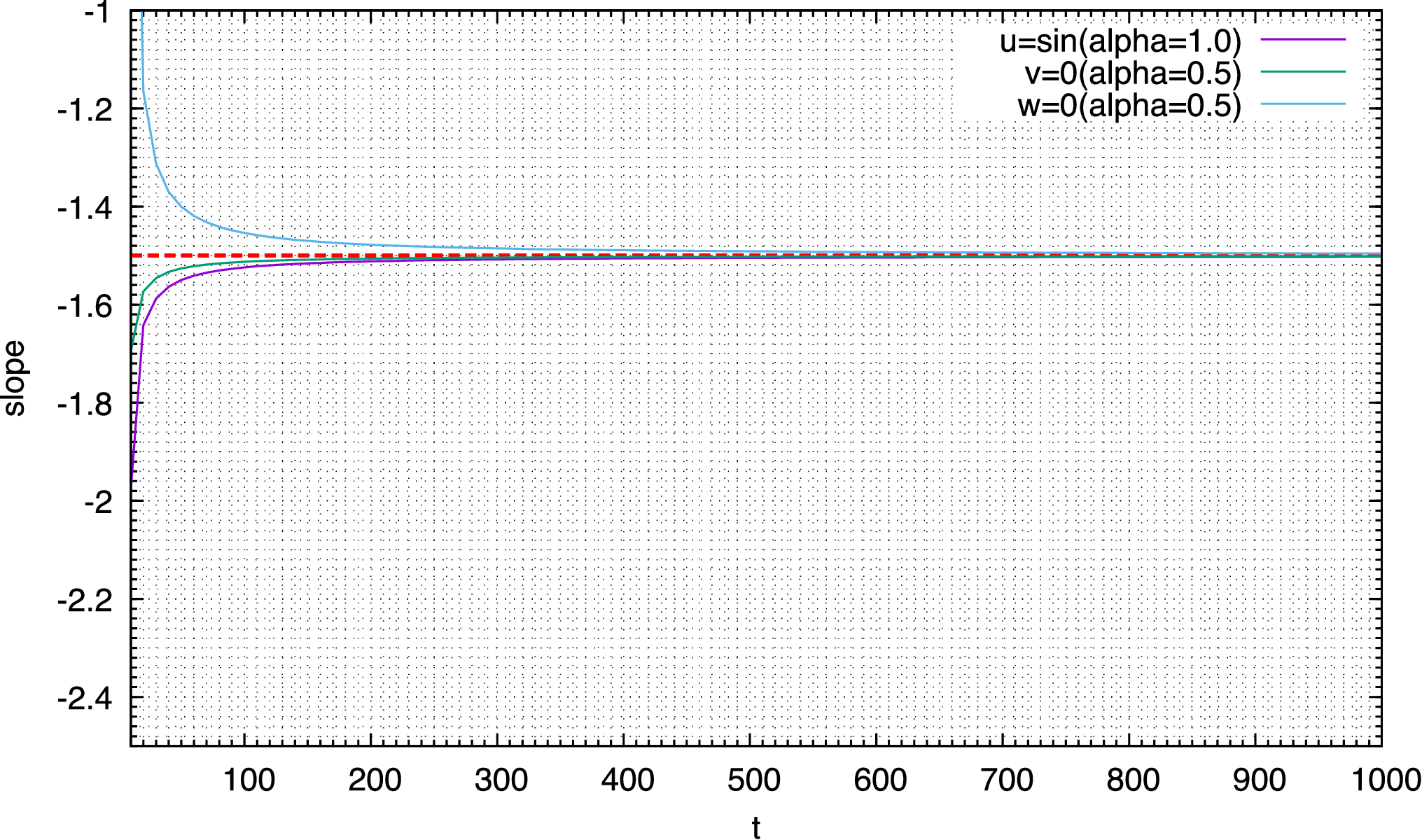}\quad
\includegraphics[width=0.48\textwidth]{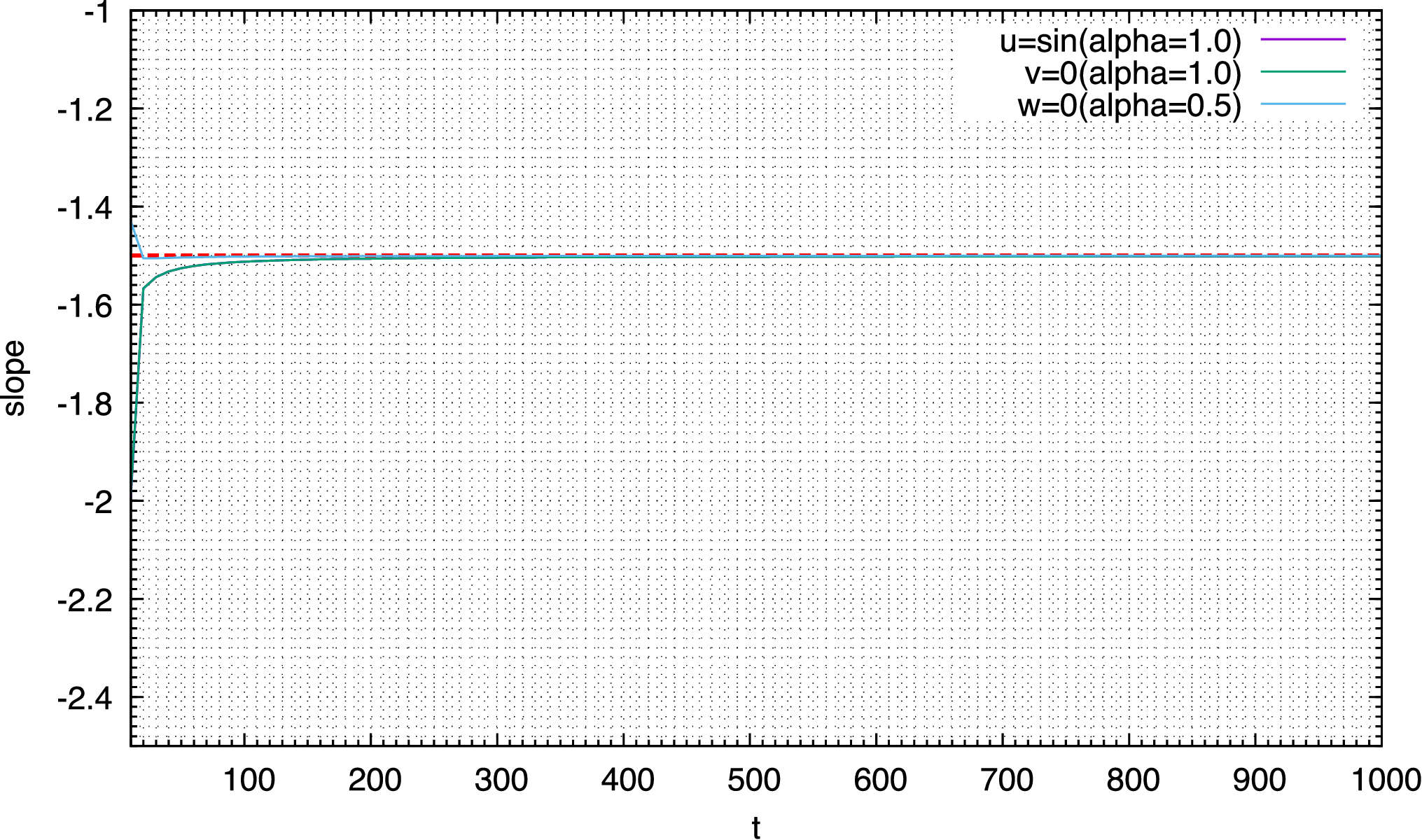}
\caption{Long-time asymptotic behavior of the solution to the coupled system \eqref{eq-gov3} with $v_0=w_0\equiv0$. Left: $\al=1.0$, $\be=0.5$ and $\ga=0.5$. Right: $\al=\be=1.0$ and $\ga=0.5$.}\label{asym-be_TPFDS_K=3_v=0-w=0}
\end{figure}

In order to better understand the mechanism, again we observe the decay patterns with various combinations of orders $\al,\be,\ga$. As listed in Table \ref{asym-be_TPFDS_K=3_v=0-w=0_table}, it reveals that all decay rates take the form of $t^{-(1+\ga)}$ regardless of the order $\be$ at the middle. In view of the initial supply, this time the lowest order with a non-vanishing initial value is exactly $1$. Then from Table \ref{asym-be_TPFDS_K=3_v=0-w=0_table}, we can conjecture that the decay rate in this case is always superlinear, whose power only depends on the lowest order ($\ga$ in this example) among all components.
\begin{table}[htbp]
\caption{Long-time asymptotic behavior of the solution to the coupled system \eqref{eq-gov3} with $v_0=w_0\equiv0$ and various choices of fractional orders.}\label{asym-be_TPFDS_K=3_v=0-w=0_table}
\begin{tabular}{ccc|c}
\hline\hline
$\al$ & $\be$ & $\ga$ & $\|u(t)\|,\|v(t)\|,\|w(t)\|$\\
\hline
$1.0$ & $0.5$ & $0.3$ & $t^{-1.3}$\\
$1.0$ & $0.5$ & $0.5$ & $t^{-1.5}$\\
$1.0$ & $0.7$ & $0.5$ & $t^{-1.5}$\\
$1.0$ & $1.0$ & $0.3$ & $t^{-1.3}$\\
$1.0$ & $1.0$ & $0.5$ & $t^{-1.5}$\\
$1.0$ & $1.0$ & $0.7$ & $t^{-1.7}$\\
\hline\hline
\end{tabular}
\end{table}
\end{example}

\section{Concluding remarks}

This research established a fundamental dichotomy in the long-time decay behavior of weakly coupled subdiffusion systems provided that the initial value of the lower-order component vanishes identically. When the highest fractional order satisfies $0<\alpha < 1$, the solutions exhibit characteristic sublinear decay $t^{-\alpha}$, consistent with the established pattern for the single fractional diffusion equation. Remarkably, systems with the highest order $\alpha = 1$ demonstrate a qualitatively distinct superlinear decay pattern $t^{-(1+\beta)}$. This accelerated decay phenomenon fundamentally distinguishes coupled systems from single equations, where such a superlinear decay pattern never occurs. 

As for the open problems related to the asymptotics for the coupled subdiffusion system, let us mention the decay patterns for
\begin{enumerate}
\item multi-component fractional differential systems, i.e., $K\ge3$, and
\item coupled time-fractional wave systems, i.e., the highest order is in $(1,2)$.
\end{enumerate}

For (i), in the previous section, we have collected sufficient hints from numerical experiments to propose a general conjecture on the sharp long-time decay rates for coupled subdiffusion systems with more than $2$ components.
\begin{conjecture*}
Let $K=2,3,\dots$ and consider the initial-boundary value problem for a coupled subdiffusion system with $K$ components
\begin{equation}\label{eq-couple1}
\left\{\begin{alignedat}{2}
& \pa_t^{\al_k}(u_k-u_0^{(k)})-\rdiv(\bm A_k(\bm x)\nb u_k)+\sum_{\ell=1}^K c_{k\ell}(\bm x,t)u_\ell=0 & \quad & \mbox{in }\Om\times\BR_+,\\
& u_k=0 & \quad & \mbox{on }\pa\Om\times\BR_+,
\end{alignedat}\right.
\end{equation}
$k=1,\dots,K,$ where $1\ge\al_1\ge\cdots\ge\al_K>0,$ $\al_K<1$ and the coefficients $\bm A_k,c_{k\ell}$ $(k,\ell=1,\dots,K)$ fulfill similar conditions as {\rm\eqref{eq-assume1}--\eqref{eq-assume2}}. Let $\un\al$ be the lowest order among $\al_1,\dots,\al_K$ whose corresponding initial value satisfies $u_0^{(k)}\not\equiv0$. Under some technical assumption such as $\eqref{eq-assume0},$ there exists a constant $C>0$ such that the solution to \eqref{eq-couple1} satisfy
\[
\|u_k(t)\|_{L^2(\Om)}\le\begin{cases}
C\,t^{-\un\al}, & \un\al<1,\\
C\,t^{-(1+\al_K)}, & \un\al=1,
\end{cases}\quad\forall\,t>1,\ k=1,\dots,K.
\]
\end{conjecture*}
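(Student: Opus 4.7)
The plan is to mirror the proof of Theorem~\ref{thm-asymp} step by step in the $K$-component setting. First I would introduce $U_k(t):=\|u_k(t)\|$ for $k=1,\dots,K$, test the $k$-th equation of \eqref{eq-couple1} against $u_k$ in $L^2(\Om)$, and combine the coercivity from Lemma~\ref{lem-coer}, the ellipticity assumption on $\bm A_k$, the non-negativity $c_{kk}\ge0$, the Poincar\'e inequality \eqref{eq-Poincare}, and Cauchy--Schwarz on the off-diagonal terms $(c_{k\ell}u_\ell,u_k)$. Dividing through by $U_k$ would give the fractional ordinary differential inequality system
\[
\pa_t^{\al_k}(U_k-U_k(0))+\f{\ka_0}{C_\Om^2}U_k-\sum_{\ell\ne k}\|c_{k\ell}\|_{L^\infty}U_\ell\le0,\quad k=1,\dots,K,
\]
valid under a $K$-component generalization of \eqref{eq-assume0}, e.g.\ strict diagonal dominance $\ka_0/C_\Om^2>\max_k\sum_{\ell\ne k}\|c_{k\ell}\|_{L^\infty}$. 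I would then compare with the associated linear equality system $(\ov U_1,\dots,\ov U_K)$ starting from the same initial data $(U_1(0),\dots,U_K(0))$.

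The second step is to extend the maximum-principle argument of Lemma~\ref{lem-max} to arbitrary $K$. Rewriting the linear system as a vector integral equation with non-negative Mittag-Leffler kernels $\tau^{\al_k-1}E_{\al_k,\al_k}(-\f{\ka_0}{C_\Om^2}\tau^{\al_k})$ (Lemma~\ref{lem-ML}(ii)), the Picard iteration starting from $\bm0$ preserves non-negativity at every step, and convergence in $L_{\mathrm{loc}}^2$ follows from exactly the same $\Ga$-function asymptotics used in the $K=2$ case. Linearity together with the resulting comparison principle then yields $U_k(t)\le\ov U_k(t)$ for each $k$, reducing the problem to the asymptotic analysis of the auxiliary linear system.

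The core step is the $K$-component analogue of Lemma~\ref{lem-asymp}. Taking Laplace transforms and solving by Cramer's rule, one writes $\wh{\ov U}_k(s)=N_k(s)/D(s)$, where $D(s)$ is the determinant of the $K\times K$ matrix with diagonal entries $s^{\al_j}+\ka_0/C_\Om^2$ and off-diagonal entries $-\|c_{jk}\|_{L^\infty}$, while $N_k(s)$ is a linear combination of the source terms $s^{\al_j-1}U_j(0)$. A Gershgorin-type argument based on strict diagonal dominance ensures that all zeros of $D(s)$ have strictly negative real part and non-vanishing imaginary part, so Lemma~\ref{lem-residue1} applies and the inversion reduces to an exponentially decaying residue sum plus the branch-cut integral $\f1\pi\int_{\BR_+}\e^{-rt}\rIm\wh{\ov U}_k(r\e^{\ri\pi})\,\rd r$, whose large-$t$ behavior is dictated by the order of vanishing of $\rIm\wh{\ov U}_k(r\e^{\ri\pi})$ as $r\to0^+$.

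The decisive obstacle will be the asymptotic bookkeeping of this imaginary part. Expanding $D(s)$ via the Leibniz formula produces monomials $s^{\al_{k_1}+\cdots+\al_{k_m}}$ over various index subsets, and substituting $s=r\e^{\ri\pi}$ converts each into a trigonometric combination in the arguments $(\al_{k_1}+\cdots+\al_{k_m})\pi$. If $\un\al<1$, the numerator $N_k(s)$ carries the singular factor $s^{\un\al-1}$ from the active initial datum of lowest order, so one expects $\rIm\wh{\ov U}_k(r\e^{\ri\pi})\sim r^{\un\al-1}$ and hence the decay $t^{-\un\al}$. If $\un\al=1$, the entries $s^{\al_j-1}U_j(0)=U_j(0)$ from integer-order components become entire and contribute no cut-singularity; the leading branch-cut contribution must then arise from the interaction between the integer-order factor in the determinant expansion and the lowest fractional order $\al_K$, producing $\rIm\wh{\ov U}_k(r\e^{\ri\pi})\sim r^{\al_K}$ and the superlinear rate $t^{-(1+\al_K)}$. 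Making this rigorous---in particular, proving that all lower-order (hence slower-decaying) imaginary contributions cancel identically, and handling the degeneracies when several $\al_k$ coincide or when more than one order equals $1$---is the main technical difficulty, since the cancellations visible in the $K=2$ computation of Lemma~\ref{lem-asymp} become much more intricate combinatorially.
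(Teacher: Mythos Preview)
The statement you are attempting to prove is a \emph{conjecture}, not a theorem: the paper does not prove it. Immediately after stating it, the authors write that ``the Laplace transform argument in the proof of Theorem~\ref{thm-asymp} is unrealistically complicated for $3$ or more components. Hence, the verification of this conjecture awaits a different methodology, which is left as a future topic.'' So there is no proof in the paper to compare your proposal against, and your plan---extend the $K=2$ argument to general $K$---is precisely the route the authors explicitly abandon.

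Your outline is faithful to the $K=2$ proof and you correctly isolate the real obstruction: controlling the small-$r$ expansion of $\rIm\wh{\ov U}_k(r\e^{\ri\pi})$ when $D(s)$ is a $K\times K$ determinant. But the proposal does not actually overcome this obstruction; it only describes it. Two concrete gaps remain. First, the claim that a Gershgorin argument forces all zeros of $D(s)$ off the negative real axis is not obvious: Gershgorin localizes eigenvalues of a fixed matrix, whereas here the matrix entries $s^{\al_j}+\ka_0/C_\Om^2$ vary with $s$ on a Riemann surface, and you must rule out real negative roots of a transcendental determinant, not eigenvalues of a single matrix. Second, and more seriously, the assertion that ``all lower-order imaginary contributions cancel identically'' is exactly the miracle that makes the $K=2$ computation work (the explicit trigonometric identities collapsing $\rIm(p(r)\ov{q(r)})$), and you offer no mechanism for why the analogous cancellations persist for $K\ge3$, especially with repeated orders or multiple $\al_j=1$. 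Absent such a mechanism, the proposal is a restatement of the difficulty rather than a resolution of it.
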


In the case of $K=2$, the above conjecture reduces exactly to Theorem \ref{thm-asymp}. For $K=3$, it agrees with all numerical results obtained above and it seems highly possible to hold true for larger $K$. Nevertheless, the Laplace transform argument in the proof of Theorem \ref{thm-asymp} is unrealistically complicated for $3$ or more components. Hence, the verification of this conjecture awaits a different methodology, which is left as a future topic.

For (ii), the case $1<\alpha<2$ is indeed an important and interesting direction. In our current work, the decay pattern was established by employing the coercivity and maximum principle for fractional differential equations to derive our main results. However, for the case $1<\alpha<2$, the existing theoretical tools mentioned above are not yet well-established, and there are even known counterexamples. For instance, it was demonstrated in \cite{HL23} that solutions of fractional differential equations in this range may exhibit finite oscillation behavior and have finitely many zeros, which fundamentally differs from the case $0<\alpha<1$, see e.g., as shown in the paper \cite{LiuRunYa2016}.

We plan to further investigate the properties of solutions in these cases and explore potential extensions of mathematical tools in subsequent research.


\appendix
\section{Explicit decay rate of a decoupled subdiffusion system}

In Section \ref{sec-test}, the motivation of discovering Theorem \ref{thm-asymp} by means of numerical simulation was described. As a supplementary clue, in this appendix we provide another theoretical approach to finding the superlinear decay rate $t^{-(1+\be)}$ of the solution to a special decoupled system of a usual diffusion equation and a subdiffusion one.

Let us consider the initial-boundary value problem
\begin{equation}\label{eq-ibvp-nonsys1}
\begin{cases}
(\pa_t+\cA)u=0,\ (\pa_t^\be+\cA)v=u & \mbox{in }\Om\times\BR_+,\\
u=u_0\not\equiv0,\ v\equiv0 & \mbox{in }\Om\times\{0\},\\
u=v=0 & \mbox{on }\pa\Om\times\BR_+,
\end{cases}
\end{equation}
where $\be\in(0,1)$, $u_0\in L^2(\Om)$ and $\cA$ is a self-adjoint elliptic operator defined by
\[
\cA:H^2(\Om)\cap H_0^1(\Om)\longrightarrow L^2(\Om),\quad\psi\longmapsto-\rdiv(\bm A(\bx)\nb \psi)+c\psi.
\]
Here $\bm A=(a_{ij})_{1\le i,j\le d}\in C^1(\ov\Om;\BR_{\mathrm{sym}}^{d\times d})$ is the same matrix-valued function satisfying \eqref{eq-assume1}, and $0\le c\in L^\infty(\Om)$. Then it is well known that $\cA$ admits an eigensystem $\{(\la_n,\vp_n)\}_{n=1}^\infty$ such that
\[
0<\la_1<\la_2\le\cdots,\quad\la_n\longrightarrow\infty\ (n\to\infty),\quad\cA\vp_n=\la_n\vp_n\quad\mbox{in }\Om,
\]
and $\{\vp_n\}\subset H^2(\Om)\cap H_0^1(\Om)$ forms a complete orthonormal basis of $L^2(\Om)$.

It is readily seen that \eqref{eq-ibvp-nonsys1} is decoupled because $u$ does not depend on $v$, so that one can solve $u$ and $v$ one by one. The equation of $u$ is a usual parabolic one and of course $u(t)$ decays exponentially. On the other hand, we can calculate the explicit decay rate of $v(t)$ as follows.

\begin{lemma}\label{lem-sharp}
Let $(u,v)$ be the solution to \eqref{eq-ibvp-nonsys1}. Then there holds
\[
\left\|v(t)-\f{u_\infty}{-\Ga(-\be)}t^{-(1+\be)}\right\|_{H^6(\Om)}=o(t^{-(1+\be)}),\quad t\gg1,
\]
where $u_\infty=\cA^{-3}u_0$ is the solution to the boundary value problem for a triple elliptic equation
\[
\begin{cases}
\cA^3u_\infty=u_0 & \mbox{in }\Om,\\
u_\infty=\cA u_\infty=\cA^2u_\infty=0 & \mbox{on }\pa\Om.
\end{cases}
\]
\end{lemma}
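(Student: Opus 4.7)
The plan is to use the spectral decomposition relative to the eigensystem $\{(\la_n,\vp_n)\}_{n=1}^\infty$ of $\cA$. Since $u$ solves the usual parabolic equation, $u(t)=\sum_n(u_0,\vp_n)\e^{-\la_n t}\vp_n$, and since $v(0)\equiv 0$, Duhamel's formula for $(\pa_t^\be+\cA)v=u$ yields the spectral representation
$$v(t)=\sum_{n=1}^\infty(u_0,\vp_n)\,I_n(t)\,\vp_n,\qquad I_n(t):=\int_0^t(t-s)^{\be-1}E_{\be,\be}(-\la_n(t-s)^\be)\,\e^{-\la_n s}\,\rd s.$$
Observing that $(\cA^{-3}u_0,\vp_n)=(u_0,\vp_n)/\la_n^3$, the conclusion reduces to establishing the uniform pointwise asymptotic
$$I_n(t)=\f{t^{-1-\be}}{-\Ga(-\be)\,\la_n^3}+r_n(t),\qquad |r_n(t)|\le C\,\f{t^{-1-2\be}}{\la_n^4},$$
valid for all sufficiently large $t$ and uniformly in $n$.

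To obtain this, I would split $I_n(t)$ at $s=t/2$. On $[t/2,t]$, the factor $\e^{-\la_n s}\le\e^{-\la_n t/2}$ combined with $|E_{\be,\be}(-z)|\le C/(1+z)$ from Lemma \ref{lem-ML} produces an exponentially small contribution absorbed into $r_n$. On $[0,t/2]$, the argument $\la_n(t-s)^\be\ge\la_n(t/2)^\be$ is large for $t\gg 1$, so the standard Mittag-Leffler asymptotic (with the $z^{-1}$ term vanishing because $1/\Ga(0)=0$)
$$E_{\be,\be}(-z)=-\f{1}{\Ga(-\be)\,z^2}+O(z^{-3}),\qquad z\to+\infty,$$
applies uniformly. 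Substituting this and using $(1-s/t)^{-1-\be}=1+O(s/t)$ together with $\int_0^{t/2}\e^{-\la_n s}\,\rd s=1/\la_n+O(\e^{-\la_n t/2})$ reduces the leading part to
$$-\f{1}{\Ga(-\be)\,\la_n^2}\int_0^{t/2}\f{\e^{-\la_n s}}{(t-s)^{1+\be}}\,\rd s=-\f{t^{-1-\be}}{\Ga(-\be)\,\la_n^3}+O\!\left(\f{t^{-2-\be}}{\la_n^4}\right),$$
while the $O(z^{-3})$ tail of the Mittag-Leffler expansion contributes $O(t^{-1-2\be}/\la_n^4)$, which dominates for $\be<1$.

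Finally, I would invoke the norm equivalence $\|w\|_{H^6(\Om)}\sim\|\cA^3 w\|_{L^2(\Om)}$, valid on $\{w\in H^6(\Om):w=\cA w=\cA^2 w=0\ \text{on}\ \pa\Om\}$; elliptic regularity ensures that $u_\infty,\cA u_\infty,\cA^2 u_\infty$ all lie in $H_0^1(\Om)$, and each $\vp_n$ shares these boundary properties, so the difference $v(t)-u_\infty t^{-1-\be}/(-\Ga(-\be))$ belongs to this subspace. Parseval then yields
$$\left\|v(t)-\f{u_\infty}{-\Ga(-\be)}\,t^{-1-\be}\right\|_{H^6(\Om)}^2\le C\sum_{n=1}^\infty\la_n^6(u_0,\vp_n)^2|r_n(t)|^2\le\f{C\,t^{-2-4\be}}{\la_1^2}\|u_0\|^2,$$
so the $H^6$-norm is $O(t^{-1-2\be})=o(t^{-1-\be})$. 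The hard part will be the uniformity in $n$ of the remainder estimate: $r_n(t)$ must carry at least $\la_n^{-4}$ in order to absorb the $\la_n^6$ spectral weight, which requires careful accounting of the fact that each additional $(t-s)^{-\be}$ in the Mittag-Leffler tail produces one extra factor of $\la_n^{-1}$ upon integration against $\e^{-\la_n s}$, so the $\la_n^{-2}$ from the leading coefficient combined with two such factors produces the needed $\la_n^{-4}$.
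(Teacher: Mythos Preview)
Your approach is correct and follows a genuinely different, more direct route than the paper. The paper never splits the convolution integral; instead it expands both $E_{\be,\be}(-\la_n\tau^\be)$ and $\e^{-\la_n(t-\tau)}$ into their defining power series, evaluates the resulting Beta integrals term by term, rearranges the double sum into the closed form $t^\be\sum_{k\ge0}(-\la_n t^\be)^k E_{1,\be(k+1)+1}(-\la_n t)$, applies the asymptotic \eqref{eq-asymp-ML} to each $E_{1,\mu}$ so that the $k$-series re-sums to $t^{\be-1}\la_n^{-1}E_{\be,\be}(-\la_n t^\be)-t^{\be-2}\la_n^{-2}E_{\be,\be-1}(-\la_n t^\be)+\cdots$, and finally applies \eqref{eq-asymp-ML} once more. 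Your method---cut at $t/2$, discard the far piece by exponential decay, and feed the Mittag-Leffler asymptotic straight into the integrand on $[0,t/2]$---is more elementary and makes the uniformity in $n$ of the remainder (the $\la_n^{-4}$ you correctly flag as the crux) transparent, whereas in the paper this is implicit in the formal series rearrangements. In return, the paper's algebraic route produces the next term $\cA^{-4}u_0\,t^{-(2+\be)}/\Ga(-1-\be)$ of the expansion for free, which your splitting argument would recover only after carrying one more order in both the Mittag-Leffler tail and the Taylor expansion of $(1-s/t)^{-1-\be}$.
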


\begin{proof}
Employing the eigensystem $\{(\la_n,\vp_n)\}$ of $\cA$ and the Mittag-Leffler function, we can easily represent the explicit solution to \eqref{eq-ibvp-nonsys1} as
\[
u(t)=\sum_{n=1}^\infty\e^{-\la_n t}(u_0,\vp_n)\vp_n,\quad v(t)=\sum_{n=1}^\infty\int_0^t\tau^{\be-1}E_{\be,\be}(-\la_n\tau^\be)(u(t-\tau),\vp_n)\vp_n\,\rd\tau.
\]
Then we plug the expression of $u$ into that of $v$ to write
\begin{align*}
v(t) & =\sum_{n=1}^\infty\int_0^t\tau^{\be-1}E_{\be,\be}(-\la_n\tau^\be)\left(\sum_{n=1}^\infty\e^{-\la_n(t-\tau)}(u_0,\vp_n)\vp_n,\vp_n\right)\vp_n\,\rd\tau\\
& =\sum_{n=1}^\infty\int_0^t\tau^{\be-1}E_{\be,\be}(-\la_n\tau^\be)\,\e^{-\la_n(t-\tau)}\,\rd\tau\,(u_0,\vp_n)\vp_n=\sum_{n=1}^\infty P(t)(u_0,\vp_n)\vp_n,
\end{align*}
where
\[
P(t):=\int_0^t\tau^{\be-1}\sum_{k=0}^\infty\f{(-\la_n\tau^\be)^k}{\Ga(\be k+\be)}\sum_{k=0}^\infty\f{(-\la_n(t-\tau))^k}{\Ga(k+1)}\,\rd\tau.
\]
Next, we deal with $P(t)$ as
\begin{align*}
P(t) & =\int_0^t\tau^{\be-1}\sum_{k=0}^\infty\sum_{j=0}^\infty\f{(-\la_n\tau^\be)^j{(-\la_n(t-\tau)})^{k-j}}{\Ga(\be j+\be)\Ga(k-j+1)}\,\rd\tau\\
& =\int_0^t\tau^{\be-1}\sum_{k=0}^\infty(-\la_n)^k\sum_{j=0}^k\f{\tau^{\be j}(t-\tau)^{k-j}}{\Ga(\be j+\be)\Ga(k-j+1)}\,\rd\tau\\
& =\sum_{k=0}^\infty(-\la_n)^k\sum_{j=0}^k\int_0^t\f{\tau^{\be(j+1)-1}(t-\tau)^{k-j}}{\Ga(\be j+\be)\Ga(k-j+1)}\,\rd\tau=\sum_{k=0}^\infty(-\la_n)^k\sum_{j=0}^k Q(t, j),
\end{align*}
where
\[
Q(t,j):=\int_0^t\f{\tau^{\be(j+1)-1}(t-\tau)^{k-j}}{\Ga(\be j+\be)\Ga(k-j+1)}\,\rd\tau.
\]
Perform the integration by substitution by $\tau=\te t$, $\te\in(0,1)$, we calculate $Q(t,j)$ as
\begin{align*}
Q(t,j) & =\int_0^t\f{\tau^{\be(j+1)-1}(t-\tau)^{k-j}}{\Ga(\be j+\be)\Ga(k-j+1)}\,\rd\tau=\int_0^1\f{(\te t)^{\be(j+1)-1} ((1-\te)t)^{k-j} }{\Ga(\be j+\be)\Ga(k-j+1)}t\,\rd\te\\
& =t^{\be(j+1)-1+k-j+1}\int_0^1\f{\te^{\be(j+1)-1}(1-\te)^{k-j}}{\Ga(\be j+\be)\Ga(k-j+1)}\,\rd\te\\
& =t^{\be(j+1)+k-j}\f{B(\be j+\be,k-j+1)}{\Ga(\be j+\be)\Ga(k-j+1)}=\f{t^{\be(j+1)+k-j}}{\Ga(\be j+k-j+\be+1)},
\end{align*}
where $B(\,\cdot\,, \,\cdot\,)$ denotes the Beta function. Then substituting $Q(t,j)$ back into $P(t)$ implies
\begin{align*}
P(t) & =\sum_{k=0}^\infty(-\la_n)^k\sum_{j=0}^k Q(t, j)=\sum_{k=0}^\infty(-\la_n)^k\sum_{j=0}^k\f{t^{\be(j+1)+k-j}}{\Ga(\be j+k-j+\be+1)}\\
& =t^\be\sum_{k=0}^\infty(-\la_n)^k\sum_{j=0}^k\f{t^{\be j+(k-j)}}{\Ga(\be j+(k-j)+\be+1)}=:t^\be R(t).
\end{align*}

Now we concentrate on the series $R(t)$. Rearranging the terms in $R(t)$ according to the power of $-\la_n t^\be$, we recall the definition of the Mittag-Leffler functions to calculate
\begin{align*}
R(t) & =\sum_{k=0}^\infty(-\la_n)^k\sum_{j=0}^k\f{t^{\be j+(k-j)}}{\Ga(\be j+(k-j)+\be+1)}\\
& =\f1{\Ga(\be+1)}+(-\la_n)\left\{\f t{\Ga(\be+2)}+\f{t^\be}{\Ga(2\be+1)}\right\}\\
& \quad\,+(-\la_n)^2\left\{\f{t^2}{\Ga(\be+3)}+\f{t^{\be+1}}{\Ga(2\be+2)}+\f{t^{2\be}}{\Ga(3\be+1)}\right\}\\
& \quad\,+(-\la_n)^3\left\{\f{t^3}{\Ga(\be+4)}+\f{t^{\be+2}}{\Ga(2\be+3)}+\f{t^{2\be+1}}{\Ga(3\be+2)}+\f{t^{3\be}}{\Ga(4\be+1)}\right\}+\cdots\\
& =\left\{\f1{\Ga(\be+1)}+\f{(-\la_n t)}{\Ga(\be+2)}+\f{(-\la_n t)^2}{\Ga(\be+3)}+\f{(-\la_n t)^3}{\Ga(\be+4)}+\cdots\right\}\\
& \quad\,+(-\la_n t^\be)\left\{\f1{\Ga(2\be+1)}+\f{(-\la_n t)}{\Ga(2\be+2)}+\f{(-\la_n t)^2}{\Ga(2\be+3)}+\cdots\right\}\\
& \quad\,+(-\la_n t^\be)^2\left\{\f1{\Ga(3\be+1)}+\f{(-\la_n t)}{\Ga(3\be+2)}+\cdots\right\}+\cdots\\
& =E_{1,\be+1}(-\la_n t)+(-\la_n t^\be) E_{1,2\be+1}(-\la_n t)+(-\la_n t^\be)^2E_{1,3\be+1}(-\la_n t)+\cdots\\
& =\sum_{k=0}^\infty(-\la_n t^\be)^k E_{1,\be(k+1)+1}(-\la_n t) .
\end{align*}
Then plugging the above expression of $R(t)$ back into $P(t)$ and then into $v(t)$ yields
\begin{align*}
v(t) & =\sum_{n=1}^\infty P(t)(u_0,\vp_n)\vp_n=\sum_{n=1}^\infty t^\be R(t)(u_0,\vp_n)\vp_n\\
& =t^\be\sum_{n=1}^\infty\sum_{k=0}^\infty(-\la_n t^\be)^k E_{1,\be(k+1)+1}(-\la_n t) (u_0,\vp_n)\vp_n.
\end{align*}
Now we invoke the following asymptotic behavior for the Mittag-Leffler function $E_{\eta,\mu}(-z)$ with $\eta\in(0,2)$, $\mu>0$ and $z<0$ (see Podlubny \cite[Theorem 1.4]{PF98}):
\begin{equation}\label{eq-asymp-ML}
E_{\eta,\mu}(z)=-\f{z^{-1}}{\Ga(\mu-\eta)}-\f{z^{-2}}{\Ga(\mu-2\eta)}+O(|z|^{-3})\quad\mbox{as }z\to-\infty.
\end{equation}
Then for $t\gg1$, we can take advantage of \eqref{eq-asymp-ML} to approximate $v(t)$ as
\begin{align*}
v(t) & =t^\be\sum_{n=1}^\infty\sum_{k=0}^\infty(-\la_n t^\be)^k\left\{\f1{\Ga(\be(k+1))\la_n t}-\f1{\Ga(\be(k+1)-1)\la_n^2t^2}+O\left(\f1{(\la_n t)^3}\right)\right\}(u_0,\vp_n)\vp_n\\
& =t^{\be-1}\sum_{n=1}^\infty E_{\be,\be}(-\la_n t^\be)\f{(u_0,\vp_n)}{\la_n}\vp_n-t^{\be-2}\sum_{n=1}^\infty E_{\be,\be-1}(-\la_n t^\be)\f{(u_0,\vp_n)}{\la_n^2}\vp_n+O(t^{\be-3}).
\end{align*}
Applying the asymptotic estimate \eqref{eq-asymp-ML} again to the Mittag-Leffler functions above yields
\begin{align}
v(t) & =t^{\be-1}\sum_{n=1}^\infty\f1{-\Ga(-\be)(\la_n t^\be)^2}\f{(u_0,\vp_n)}{\la_n}\vp_n+t^{\be-2}\sum_{n=1}^\infty\f1{-\Ga(-1-\be)(\la_n t^\be)^2}\f{(u_0,\vp_n)}{\la_n^2}\vp_n+O(t^{\be-3})\nonumber\\
& =\f{t^{-(1+\be)}}{-\Ga(-\be)}\sum_{n=1}^\infty\f{(u_0,\vp_n)}{\la_n^3}\vp_n+\f{t^{-(2+\be)}}{-\Ga(-1-\be)}\sum_{n=1}^\infty\f{(u_0,\vp_n)}{\la_n^4}\vp_n+O(t^{\be-3})\nonumber\\
& =\f{\cA^{-3}u_0}{-\Ga(-\be)}t^{-(1+\be)}+\f{\cA^{-4}u_0}{\Ga(-1-\be)}t^{-(2+\be)}+O(t^{\be-3}),\quad t\gg1.\label{eq-decay}
\end{align}
Here we interpret $\f1{\Ga(0)}=\f1{\Ga(-1)}=0$ in the sense of limit, and we notice that
\[
-\Ga(-\be)>0,\quad O(t^{\be-3})=o(t^{-(1+\be)}),\ t\gg1
\]
by $\be\in(0,1)$. Therefore, the last 2 terms on the right-hand side of \eqref{eq-decay} are of order $o(t^{-(1+\be)})$ and are as smooth as $\cA^{-4}u_0$. Consequently, recalling $u_\infty=\cA^{-3}u_0$, we take $H^6(\Om)$ norm on both sides of \eqref{eq-decay} to conclude
\begin{align*}
\left\|v(t)-\f{u_\infty}{-\Ga(-\be)}t^{-(1+\be)}\right\|_{H^6(\Om)} & \le C\left\|\cA^3v(t)-\f{u_0}{-\Ga(-\be)}t^{-(1+\be)}\right\|_{L^2(\Om)}\\
& =C\|\cA^{-1}u_0\|_{L^2(\Om)}o(t^{-(1+\be)})=o(t^{-(1+\be)}),\quad t\gg1,
\end{align*}
where $C>0$ is a constant depending only on $\cA$. The proof is complete.
\end{proof}

Owing to the simplicity of \eqref{eq-ibvp-nonsys1}, we can easily write down its explicit solution, so that the above lemma provides not only the sharp decay rate of $v$ but also its limit pattern $u_\infty$, i.e., the profile of $v$ approaches $u_\infty$ as $t\to+\infty$. In other words, $v$ asymptotically takes the form of separated variables for large $t$ with the spatial component $\f{u_\infty}{-\Ga(-\be)}$ and the temporal component $t^{-(1+\be)}$. 

Moreover, based on the decay estimates, we demonstrate that if the solution decays faster than $t^{-(1+\beta)}$, then its initial value $u_0$ must be zero.
\begin{corollary}
Let $(u,v)$ be the solution to the fractional diffusion system \eqref{eq-ibvp-nonsys1} with initial data $u(x,0)=u_0(x)$ and $v(x,0)=0$. If there exists a constant $C > 0$ such that
$$
\|v(t)\|_{H^6(\Omega)} \leq Ct^{-(1+\beta)} \eta(t)
$$
with an infinitesimal $\eta(t)$ as $t\to\infty$. Then the initial value $u_0$ must be zero.
\end{corollary}
The proof is trivial and is omitted. This corollary implies that any non-trivial initial data cannot yield a solution that decays at a rate exceeding the critical rate in Lemma \ref{lem-sharp}.  A similar result for the single fractional diffusion-wave equation can be found in e.g., Yamamoto \cite{Yama2024}.

The limit pattern in more general situations is complicated, which can be another interesting future topic.

\section*{Acknowledgments}

The authors thank the anonymous referees for valuable comments. The first author thanks the National Natural Science Foundation of China (12271277), and Ningbo Youth Leading Talent Project (2024QL045).  The second author is supported by JSPS KAKENHI Grant Numbers JP22K13954, JP23KK0049 and Guangdong Basic and Applied Basic Research Foundation (No.\! 2025A1515012248).


\end{document}